\theoremstyle{plain}
\newtheorem{theorem}{Theorem}[section]
\newtheorem{proposition}[theorem]{Proposition}
\newtheorem{corollary}[theorem]{Corollary}
\newtheorem{lemma}[theorem]{Lemma}
\theoremstyle{definition}
\newtheorem{definition}[theorem]{Definition}
\newcommand{\aqf}{\overset{A\textrm{-q.f.}}{\sim}}
\newcommand{\apqf}{\overset{A'\textrm{-q.f.}}{\sim}}
\newcommand{\cqf}{\overset{\mathbb{C}\textrm{-q.f.}}{\sim}}
\newcommand{\C}{\mathbb{C}}
\newcommand{\E}{\mathcal{E}}
\renewcommand{\O}{\mathcal{O}}
\newcommand{\Z}{\mathbb{Z}}
\newcommand{\R}{\mathbb{R}}
\begin{document}
\title[Equivalent Representations of Toeplitz Algebras]{On Equivalence for Representations\\ of Toeplitz Algebras}
\author{Philip M. Gipson}
\address{Department of Mathematics \\ State University of New York College at Cortland \\ Cortland, NY 13045-0900}
\email{philip.gipson@cortland.edu}
\subjclass[2000]{46L05, 46L08, 46L55}
\keywords{Toeplitz Algebras, Equivalence Relations, Endomorphisms, Hilbert Modules}
\begin{abstract} Two new notions of equivalence for representations of a Toeplitz algebra $\E_n$, $n<\infty$, on a common Hilbert space are defined. Our main results apply to $C^*$-dynamics and the conjugacy of certain $*$-endomorphisms. One particular case of the relations is shown to coincide with the multiplicity of a representation. Previously known results due to Laca and Enomoto-Watatani are recovered as special cases.
\end{abstract}
\maketitle

First introduced by Cuntz in \cite{cuntz}, the Toeplitz algebra $\E_n$ is the universal $C^*$-algebra generated by $n\leq \infty$ isometries with pairwise orthogonal ranges. The Toeplitz algebras and their representations have been surprisingly pervasive in the theory of operator algebras; appearing in contexts as diverse as crossed products 
and $K$-theory.
In this paper we will introduce two families of equivalence relations (Definitions \ref{feq} and \ref{qfeq}) on the set of nondegenerate $*$-representations of $\E_n$ on a Hilbert space $H$. These families are both indexed by the unital $C^*$-subalgebras of $B(H)$. We give applications of the relations to dynamical systems and prove a relationship between conjugacy of endomorphisms and equivalence of representations of Toeplitz algebras (Theorems \ref{thm1} and \ref{thm2}). Lastly, we demonstrate that our results recover, as special cases, previously known results of Laca \cite{laca} and Enomoto-Watatani \cite{enomoto} concerning endomorphisms of type $\textrm{I}_\infty$ and $\textrm{II}_1$ factors.

\section{Preliminaries}

Our primary objects of study will be right Hilbert $C^*$-modules over unital $C^*$-algebras. Recall that an $A$-module homomorphism $\phi:X\to Y$ is \emph{adjointable} if there is another $A$-module homomorphism $\phi^*:Y\to X$ such that $\langle\phi (x),y\rangle_Y=\langle x,\phi^*(y)\rangle_X$ for all $x\in X$ and $y\in Y$. We'll denote the space of adjointable homomorphisms between $A$-modules $X$ and $Y$ by $L(X,Y)$ and set $L(X):=L(X,X)$. A homomorphism $\phi\in L(X,Y)$ is \emph{unitary} if $\phi\circ\phi^*=id_Y$ and $\phi^*\circ\phi=id_X$; in this case we say $X$ and $Y$ are \emph{unitarily equivalent} and write $X\simeq Y$.

A subset $\{x_i:i\in I\}\subset X$ is \emph{orthonormal} if $A$ is unital and $\langle x_i,x_j\rangle=\delta_{ij}1_A$ for all $i,j\in I$. An \emph{orthonormal basis} is a right $A$-module basis which is also an orthonormal set. It is routine to check that if $\{f_1,...,f_n\}$ is any finite orthonormal basis for $X$ then we have the decomposition $x=\sum_{i=1}^nf_i\langle f_i,x\rangle$ for all $x\in X$.

The \emph{free (right, Hilbert) $A$-module of rank $n$} (where $A$ is a unital $C^*$-algebra) is the finite direct sum $A^n=A\oplus ...\oplus A$ ($n$ summands) with the coordinatewise right $A$-module structure and $A$-valued inner product given by
$$\langle (a_i),(b_i)\rangle:=\sum_{i=1}^n a_i^*b_i\in A.$$
The distinguished vectors $e_1,...,e_n\in A^n$ (where $e_i$ is the vector which has $1_A$ in the $i$-th coordinate and zeros elsewhere) form an orthonormal basis for $A^n$ which we will term the \emph{standard basis}. 
 
We can make a natural identification of $L(A^n)$ with the matrix algebra $M_n(A)$, where the homomorphisms $\theta_{e_i,e_j}:x\mapsto e_i\langle e_j,x\rangle$ play the role of matrix units. Thus $V=[v_{ij}]\in M_n(A)$ has $Vx=\sum_{j=1}^n\sum_{i=1}^ne_jv_{ji}\langle e_i,x\rangle$ for all $x\in A^n$. Because of this it is straightforward to check that $U=[u_{jk}]\in M_n(A)$ is unitary if and only if $\sum_{i=1}^nu_{ij}u_{ik}^*=\delta_{jk}1_A$ and $\sum_{i=1}^nu_{ji}^*u_{li}=\delta_{jk}1_A$ for every $j,k=1,...,n$. That is to say that matrix unitaries are precisely the unitary homomorphisms, and vice-versa. If $\{f_1,...,f_n\}$ is any orthonormal basis of $A^n$ then letting $u_{ij}:=\langle e_i,f_j\rangle$ for each $i,j=1,...,n$ we have that $U:=[u_{ij}]$ is a unitary matrix in $M_n(A)$ such that $Ue_i=f_i$ for each $i=1,...,n$. 

A \emph{covariant representation} of $A^n$ on a Hilbert space $K$ is a pair $(\sigma,\pi)$ consisting of a linear map $\sigma:A^n\to B(K)$ and a nondegenerate $*$-representation $\pi:A\to B(K)$ which together satisfy the covariance condition $\sigma(xa)=\sigma(x)\pi(a)$ for all $x\in A^n$ and $a\in A$. A covariant representation $(\sigma,\pi)$ is \emph{Toeplitz} if $\sigma(x)^*\sigma(y)=\pi(\langle x,y\rangle)$ for all $x,y\in A^n$. 

\paragraph{\textbf{Note.}} Henceforth we will only consider the case when $A$ is concretely represented, i.e. $A\subset B(H)$ for some $H$, and every covariant representation is of the form $(\sigma,id)$ for some linear map $\sigma:A^n\to B(H)$. Consequently we will abuse the notation and write $\sigma$ alone instead of $(\sigma,id)$.

Suppose that $\sigma$ is a Toeplitz representation (i.e. $(\sigma,id)$ is a Toeplitz representation) then $\sigma(e_i)^*\sigma(e_j)=\langle e_i,e_j\rangle=\delta_{ij}I$ for all $i,j=1,...,n$. Thus $\{\sigma(e_i):i=1,...,n\}$ is a family of $n$ isometries with pairwise orthogonal ranges. We will always use $v_1,...,v_n$ to denote the universal generators of $\E_n$ and so $\E_n:=C^*(\{v_1,...,v_n:v_i^*v_j=\delta_{ij}I\})$. By the universal nature of the Toeplitz algebra $\E_n$, the assignments $v_i\mapsto \sigma(e_i)$ extend uniquely to a $*$-representation of $\E_n$ on $H$ which we will denote by $\omega_\sigma$.

Conversely, given a nondegenerate $*$-representation $\omega:\E_n\to B(H)$ we may define for all $x\in A^n$
$$\sigma_\omega(x):=\sum_{i=1}^n\omega(v_i)\langle e_i,x\rangle.$$
It is easily checked that $\sigma_\omega:A^n\to B(H)$ is a linear map which, together with the identity representation of $A$, satisfies the covariance condition; hence is a covariant representation of $\E_n$. By construction we have that $\sigma_\omega$ is Toeplitz. Note that $\sigma_\omega(e_i)=\omega(v_i)$ and this uniquely determines $\sigma_\omega$.
Naturally, $\sigma_{\omega_\sigma}=\sigma$ and $\omega_{\sigma_\omega}=\omega$.

\section{Equivalences}
Given a unital $C^*$-algebra $A\subset B(H)$, we have seen that the Toeplitz covariant representations of $A^n$ coincide with the nondegenerate $*$-representations of the Toeplitz algebra $\E_n$. We will now use this relationship to define two families of equivalence relations for representations of $\E_n$.

\subsection{Free Equivalence}
Our first notion of equivalence is inspired from the observation that if $\sigma$ is a Toeplitz covariant representation of $A^n$ and $U\in M_n(A)$ a unitary then $\sigma\circ U$ is also a Toeplitz covariant representation of $A^n$. 

\begin{definition}\label{feq} Let $A\subset B(H)$ be a unital $C^*$-subalgebra. Two nondegenerate representations $\omega$ and $\tau$ of $\E_n$ (on $H$) are \emph{$A$-free equivalent} if there is a unitary homomorphism $U\in L(A^n)$ such that 
$\sigma_\omega=\sigma_\tau\circ U$.
\end{definition}
\begin{proposition} $A$-free equivalence is an equivalence relation.
\end{proposition}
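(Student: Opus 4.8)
The plan is to verify the three defining properties of an equivalence relation—reflexivity, symmetry, and transitivity—by exhibiting, in each case, the appropriate unitary homomorphism in $L(A^n)$ witnessing the relation. The key structural fact I would lean on is that the unitary homomorphisms in $L(A^n)$ form a group under composition: this follows from the identification of $L(A^n)$ with $M_n(A)$ described in the Preliminaries, since the unitaries of any unital $C^*$-algebra (here $M_n(A)$) form a group. So the task reduces to checking that the three group-theoretic closure properties of this unitary group translate correctly through the formula $\sigma_\omega = \sigma_\tau \circ U$.

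For reflexivity, I would take $U = id_{A^n}$, which is unitary in $L(A^n)$, and note $\sigma_\omega = \sigma_\omega \circ id_{A^n}$, so every $\omega$ is $A$-free equivalent to itself. For symmetry, suppose $\sigma_\omega = \sigma_\tau \circ U$ with $U$ unitary; then composing on the right with $U^* = U^{-1}$ gives $\sigma_\omega \circ U^* = \sigma_\tau \circ U \circ U^* = \sigma_\tau \circ id_{A^n} = \sigma_\tau$, and since $U^*$ is again a unitary homomorphism in $L(A^n)$, this exhibits $\tau$ as $A$-free equivalent to $\omega$. For transitivity, suppose $\sigma_\omega = \sigma_\tau \circ U$ and $\sigma_\tau = \sigma_\rho \circ W$ with $U, W$ unitary; substituting gives $\sigma_\omega = \sigma_\rho \circ W \circ U$, and $W \circ U$ is a unitary homomorphism in $L(A^n)$ (composition of unitaries), so $\omega$ is $A$-free equivalent to $\rho$.

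I do not anticipate a genuine obstacle here; the statement is essentially the assertion that a group action (of the unitary group of $L(A^n)$ on Toeplitz representations via precomposition) induces an equivalence relation on orbits. The one point deserving a sentence of care is the implicit claim that $\sigma_\omega = \sigma_\tau \circ U$ for unitary $U$ indeed forces $\sigma_\tau \circ U$ to again be a Toeplitz covariant representation of $A^n$—but this is exactly the observation motivating Definition \ref{feq}, so it may simply be cited. A second minor point is associativity of composition of adjointable homomorphisms, which is immediate. Thus the proof is a short, direct verification, and I would present it in three clearly labeled lines corresponding to the three axioms.
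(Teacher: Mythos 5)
Your proof is correct and follows essentially the same route as the paper: reflexivity via the identity, symmetry by precomposing with $U^*$, and transitivity by precomposing with the product of the two unitaries, exactly as in the paper's direct verification. The group-action framing is a harmless gloss on the same argument.
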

\begin{proof}
Reflexivity is obvious.

Whenever $\sigma_\omega(x)=\sigma_\tau(Ux)$ for some unitary $U\in L(A^n)$ and all $x\in A^n$ we have $\sigma_\omega(U^*x)=\sigma_\tau(UU^*x)=\sigma_\tau(x)$ for all $x\in A^n$ and so $A$-free equivalence is symmetric.

Lastly, if $\sigma_\omega(x)=\sigma_\tau(Ux)$ and $\sigma_\tau(y)=\sigma_\kappa(Wy)$ for some unitaries $U,W\in L(A^n)$ and all $x,y\in A^n$ then $\sigma_\omega(x)=\sigma_\kappa(WUx)$ for all $x\in A^n$ and $WU$ is obviously a unitary in $M_n(A)$. Hence $\omega$ and $\kappa$ are $A$-free equivalent and we have transitivity.
\end{proof}

\begin{proposition}\label{feform} Two representations $\omega$ and $\tau$ are $A$-free equivalent if and only if there is a unitary $U=[u_{jk}]\in M_n(A)$ for which
$$\omega(v_i)=\sum_{j=1}^n\tau(v_j)u_{ji}$$
for all $i=1,...,n$.
\end{proposition}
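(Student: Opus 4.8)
The plan is to translate the defining condition $\sigma_\omega=\sigma_\tau\circ U$ into a statement about the generators $v_1,\dots,v_n$, using the dictionary between $L(A^n)$ and $M_n(A)$ set up in the Preliminaries. First I would observe that a Toeplitz covariant representation of $A^n$ is completely determined by its values on the standard basis: for any $x\in A^n$ we have $x=\sum_{i=1}^n e_i\langle e_i,x\rangle$, and covariance together with linearity gives $\sigma(x)=\sum_{i=1}^n\sigma(e_i)\langle e_i,x\rangle$. Since $U\in L(A^n)$ is an $A$-module homomorphism, $\sigma_\tau\circ U$ is again covariant (because $U(xa)=(Ux)a$), so the identity $\sigma_\omega=\sigma_\tau\circ U$ holds on all of $A^n$ if and only if it holds on each $e_i$.

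Next I would compute $Ue_i$. Writing $U=[u_{jk}]\in M_n(A)$ under the identification of $L(A^n)$ with $M_n(A)$ from the Preliminaries, where the $\theta_{e_j,e_k}$ serve as matrix units, one gets $Ue_i=\sum_{k,l}e_k u_{kl}\langle e_l,e_i\rangle=\sum_{j=1}^n e_j u_{ji}$. Applying $\sigma_\tau$, using the covariance condition to pull the coefficients $u_{ji}\in A$ outside, and recalling $\sigma_\tau(e_j)=\tau(v_j)$, yields $\sigma_\tau(Ue_i)=\sum_{j=1}^n\tau(v_j)u_{ji}$. Since $\sigma_\omega(e_i)=\omega(v_i)$, the equality of $\sigma_\omega$ and $\sigma_\tau\circ U$ on the standard basis is precisely the asserted formula $\omega(v_i)=\sum_{j=1}^n\tau(v_j)u_{ji}$.

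For the two directions: if $\omega$ and $\tau$ are $A$-free equivalent, the witnessing unitary $U\in L(A^n)$ corresponds to a matrix unitary $[u_{jk}]\in M_n(A)$, and the computation above produces the displayed equations. Conversely, given a unitary matrix $U=[u_{jk}]\in M_n(A)$ satisfying those equations, the same computation shows that $\sigma_\omega$ and $\sigma_\tau\circ U$ agree on $e_1,\dots,e_n$, hence everywhere by the first step, so $U$ regarded as an element of $L(A^n)$ witnesses $A$-free equivalence. I do not expect a genuine obstacle in this argument; the only points requiring care are keeping the index placement straight in the $L(A^n)\cong M_n(A)$ identification so that $Ue_i=\sum_j e_j u_{ji}$ (rather than $\sum_j e_j u_{ij}$), and invoking covariance — not merely linearity — when extracting the $A$-coefficients from inside $\sigma_\tau$.
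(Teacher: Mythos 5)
Your proposal is correct and follows essentially the same route as the paper: identify $U\in L(A^n)$ with a matrix $[u_{jk}]\in M_n(A)$, compute $Ue_i=\sum_{j}e_ju_{ji}$, and use the decomposition $\sigma(x)=\sum_i\sigma(e_i)\langle e_i,x\rangle$ (equivalently, the defining formula for $\sigma_\omega$) to pass between agreement on the standard basis and agreement on all of $A^n$. The paper's converse direction is just the explicit version of your "agree on $e_1,\dots,e_n$, hence everywhere" step, so there is no substantive difference.
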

\begin{proof} Suppose that there is a unitary $U\in L(A^n)$ for which $\sigma_\omega(x)=\sigma_\tau(Ux)$ for all $x\in A^n$. Since we may identify $L(A^n)$ with $M_n(A)$ and thus there is a matrix $[u_{jk}]\in M_n(A)$ such that $Ux=\sum_{j=1}^n\sum_{k=1}^ne_ju_{jk}\langle e_k,x\rangle$ for all $x\in A^n$. In particular, we have that $Ue_i=\sum_{j=1}^ne_ju_{ji}$ for all $i=1,...,n$. Hence
$$\omega(v_i)=\sigma_\omega(e_i)=\sigma_\tau(Ue_i)=\sigma_\tau\left(\sum_{j=1}^ne_ju_{ji}\right)=\sum_{j=1}^n\sigma_\tau(e_j)u_{ji}=\sum_{j=1}^n\tau(v_j)u_{ji}$$
as desired.

Conversely, if there is a unitary $U=[u_{jk}]\in M_n(A)$ for which $\omega(v_i)=\sum_{j=1}^n\tau(v_j)u_{ji}$ for all $i=1,...,n$ then
\begin{eqnarray*}
\sigma_\omega(x)&=&\sum_{i=1}^n\omega(v_i)\langle e_i,x\rangle\\
&=&\sum_{i=1}^n\sum_{j=1}^n\tau(v_j)u_{ji}\langle e_i,x\rangle\\
&=&\sigma_\tau\left(\sum_{i=1}^n\sum_{j=1}^ne_ju_{ji}\langle e_i,x\rangle\right)\\
&=&\sigma_\tau(Ux)\end{eqnarray*}
for all $x\in  A^n$.
\end{proof}
\subsection{Quasifree Equivalence}
It is a standard fact that when $W\in B(H)$ is a unitary the mapping $Ad_W:T\mapsto WTW^*$ is a $*$-automorphism of $B(H)$. Of more interest to us is that when $A\subset B(H)$ is a unital $C^*$-algebra and $Ad_W$ restricts to an automorphism of $A$ we will say that $W$ is \emph{$A$-fixing}. Since $Ad_{W^*}=(Ad_W)^{-1}$ we find that $W$ is $A$-fixing if and only if $W^*$ is $A$-fixing. It is also obvious that if $W_1$ and $W_2$ are both $A$-fixing then $W_1W_2$ is $A$-fixing as well.

If $\omega$ is a representation of $\E_n$ and $W\in B(H)$ is a unitary, denote by $Ad_W\circ \omega$ the representation generated by $v_i\mapsto W\omega(v_i)W^*$. 

\begin{definition}\label{qfeq} Two nondegenerate representations $\omega$ and $\tau$ (on $H$) are \emph{$A$-quasifree equivalent}, denoted $\omega\aqf\tau$, if there is an $A$-fixing unitary $W\in B(H)$ such that $\omega$ and $Ad_W\circ\tau$ are $A$-free equivalent.
\end{definition}

\begin{proposition}\label{qfeform} $\omega\aqf\tau$ if and only if there is an $A$-fixing unitary $W\in  B(H)$ and a unitary $U=[u_{jk}]\in M_n(A)$ for which
$$\omega(v_i)=\sum_{j=1}^nW\tau(v_j)W^*u_{ji}$$
for all $i=1,...,n$.
\end{proposition}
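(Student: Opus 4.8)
The plan is to unwind the two definitions and then invoke Proposition \ref{feform}. By Definition \ref{qfeq}, $\omega \aqf \tau$ means precisely that there exists an $A$-fixing unitary $W \in B(H)$ such that $\omega$ and $Ad_W \circ \tau$ are $A$-free equivalent. So the whole statement reduces to identifying, for a fixed such $W$, what $A$-free equivalence between $\omega$ and $Ad_W\circ\tau$ says concretely.

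First I would apply Proposition \ref{feform} to the pair $\omega$ and $Ad_W\circ\tau$: these are $A$-free equivalent if and only if there is a unitary $U = [u_{jk}] \in M_n(A)$ with $\omega(v_i) = \sum_{j=1}^n (Ad_W\circ\tau)(v_j) u_{ji}$ for all $i$. Then I would simply substitute the definition of $Ad_W\circ\tau$, namely $(Ad_W\circ\tau)(v_j) = W\tau(v_j)W^*$ (this is exactly how $Ad_W\circ\tau$ was defined just before Definition \ref{qfeq}, as the representation generated by $v_i \mapsto W\omega(v_i)W^*$), to get $\omega(v_i) = \sum_{j=1}^n W\tau(v_j)W^* u_{ji}$. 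Combining the two directions of Proposition \ref{feform} with the existential quantifier over $W$ gives the claimed equivalence in both directions.

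The only point requiring a word of care is that Proposition \ref{feform} is stated for arbitrary nondegenerate representations of $\E_n$, so I should note that $Ad_W\circ\tau$ is indeed such a representation — it is a $*$-representation since $Ad_W$ is a $*$-automorphism of $B(H)$, it is nondegenerate because $W$ is a surjective unitary, and the generators $W\tau(v_j)W^*$ are isometries with pairwise orthogonal ranges since conjugation by a unitary preserves these relations. Strictly speaking this was already implicit in Definition \ref{qfeq} making sense, so a single sentence suffices.

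I do not anticipate any genuine obstacle here: the statement is essentially a translation of Definition \ref{qfeq} through Proposition \ref{feform}, and the proof is a two-line substitution in each direction. If anything, the "hard part" is merely bookkeeping — making sure the indices $u_{ji}$ land in the same positions as in Proposition \ref{feform} and that the $W$ produced in the forward direction is the same $W$ fed into the backward direction. The proof will therefore be short:

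\begin{proof}
First note that for any $A$-fixing unitary $W \in B(H)$ the representation $Ad_W\circ\tau$ is a nondegenerate $*$-representation of $\E_n$, since $Ad_W$ is a $*$-automorphism of $B(H)$ carrying the isometries $\tau(v_j)$ to isometries $W\tau(v_j)W^*$ with pairwise orthogonal ranges.

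Suppose $\omega \aqf \tau$. By Definition \ref{qfeq} there is an $A$-fixing unitary $W \in B(H)$ such that $\omega$ and $Ad_W\circ\tau$ are $A$-free equivalent. Applying Proposition \ref{feform} to the pair $\omega$ and $Ad_W\circ\tau$, there is a unitary $U = [u_{jk}] \in M_n(A)$ with
$$\omega(v_i) = \sum_{j=1}^n (Ad_W\circ\tau)(v_j)u_{ji} = \sum_{j=1}^n W\tau(v_j)W^* u_{ji}$$
for all $i = 1, \ldots, n$, as desired.

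Conversely, suppose there is an $A$-fixing unitary $W \in B(H)$ and a unitary $U = [u_{jk}] \in M_n(A)$ with $\omega(v_i) = \sum_{j=1}^n W\tau(v_j)W^* u_{ji} = \sum_{j=1}^n (Ad_W\circ\tau)(v_j)u_{ji}$ for all $i = 1, \ldots, n$. By Proposition \ref{feform}, $\omega$ and $Ad_W\circ\tau$ are $A$-free equivalent, and hence $\omega \aqf \tau$ by Definition \ref{qfeq}.
\end{proof}
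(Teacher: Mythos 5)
Your proof is correct and follows essentially the same route as the paper: apply the characterization of $A$-free equivalence (Proposition \ref{feform}) to the pair $\omega$ and $Ad_W\circ\tau$ and substitute $(Ad_W\circ\tau)(v_j)=W\tau(v_j)W^*$. (The paper's one-line proof cites Theorem \ref{thm1}, which is evidently a typo for Proposition \ref{feform} — the reference you actually use.)
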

The proof is simply an application of Theorem \ref{thm1} to the representations $\omega$ and $Adj_W\circ\tau$.

\begin{proposition} For any given unital $C^*$-algebra $A\subseteq B(H)$, $A$-quasifree equivalence is an equivalence relation.
\end{proposition}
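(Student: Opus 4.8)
The plan is to reduce the three axioms to the already-proven properties of $A$-free equivalence, using one auxiliary observation as the bridge: \emph{if $\rho_1$ and $\rho_2$ are $A$-free equivalent and $V\in B(H)$ is an $A$-fixing unitary, then $Ad_V\circ\rho_1$ and $Ad_V\circ\rho_2$ are $A$-free equivalent.} I would prove this by taking a unitary $U=[u_{jk}]\in M_n(A)$ with $\rho_1(v_i)=\sum_j\rho_2(v_j)u_{ji}$ (Proposition \ref{feform}) and conjugating by $V$, which yields $V\rho_1(v_i)V^*=\sum_j\bigl(V\rho_2(v_j)V^*\bigr)\bigl(Vu_{ji}V^*\bigr)$; since $V$ is $A$-fixing each $Vu_{ji}V^*$ lies in $A$, and the matrix $[Vu_{jk}V^*]$ is again unitary in $M_n(A)$ because a $*$-automorphism applied entrywise preserves the matrix-unitarity criterion recorded in Section 1. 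Hence Proposition \ref{feform} gives the claimed $A$-free equivalence of $Ad_V\circ\rho_1$ and $Ad_V\circ\rho_2$.

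With that in hand the three axioms follow formally. Reflexivity: take $W=I$, which is trivially $A$-fixing, so $\omega$ is $A$-free equivalent (indeed equal) to $Ad_W\circ\omega$. Symmetry: if $\omega\aqf\tau$ then $\omega$ is $A$-free equivalent to $Ad_W\circ\tau$ for some $A$-fixing $W$; by symmetry of $A$-free equivalence $Ad_W\circ\tau$ is $A$-free equivalent to $\omega$, and applying the auxiliary observation with $V=W^*$ (again $A$-fixing) converts this into ``$\tau$ is $A$-free equivalent to $Ad_{W^*}\circ\omega$'' (using $Ad_{W^*}\circ Ad_W=id$), i.e.\ $\tau\aqf\omega$. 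Transitivity: if $\omega\aqf\tau$ via an $A$-fixing $W_1$ and $\tau\aqf\kappa$ via an $A$-fixing $W_2$, apply the auxiliary observation with $V=W_1$ to the relation witnessing $\tau\aqf\kappa$ to get that $Ad_{W_1}\circ\tau$ is $A$-free equivalent to $Ad_{W_1}\circ Ad_{W_2}\circ\kappa=Ad_{W_1W_2}\circ\kappa$; transitivity of $A$-free equivalence then makes $\omega$ $A$-free equivalent to $Ad_{W_1W_2}\circ\kappa$, and since $W_1W_2$ is $A$-fixing this is exactly $\omega\aqf\kappa$.

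I expect the only step with real content to be the auxiliary observation, and within it the single point needing care is that conjugation by an $A$-fixing unitary, applied entrywise, carries unitaries of $M_n(A)$ to unitaries of $M_n(A)$ — this is precisely where the $A$-fixing hypothesis (as opposed to mere unitarity of $W$) is indispensable, since otherwise the transported coefficient matrix need not even have entries in $A$. The remaining bookkeeping uses only the two structural facts noted just before Definition \ref{qfeq}: that $W$ is $A$-fixing iff $W^*$ is, and that a product of $A$-fixing unitaries is $A$-fixing.
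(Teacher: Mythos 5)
Your proof is correct, but it is organized differently from the paper's. The paper proves symmetry and transitivity by brute force: from $\omega(v_i)=\sum_j W\tau(v_j)W^*u_{ji}$ it writes down an explicit candidate coefficient matrix (e.g.\ $v_{jk}=W^*u_{kj}^*W$ for symmetry, and $t_{ki}=\sum_j W_1v_{kj}W_1^*u_{ji}$ for transitivity), checks by hand that it lies in $M_n(A)$ and is unitary, and then verifies the defining identity entrywise. You instead isolate the one piece of genuine content — that conjugation by an $A$-fixing unitary, applied entrywise, carries unitaries of $M_n(A)$ to unitaries of $M_n(A)$, so it preserves $A$-free equivalence — and then obtain reflexivity, symmetry, and transitivity formally from the already-established fact that $A$-free equivalence is an equivalence relation together with the closure properties of $A$-fixing unitaries ($W$ $A$-fixing iff $W^*$ is; products of $A$-fixing unitaries are $A$-fixing) and the identity $Ad_{W_1}\circ Ad_{W_2}=Ad_{W_1W_2}$. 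Your route buys economy: the "tedious" unitarity verifications in the paper are replaced by the single remark that entrywise application of the $*$-automorphism $Ad_V$ (equivalently $Ad_V\otimes id$ on $M_n(A)$) preserves the unitarity relations, and the matrix product appearing in the paper's transitivity step is absorbed into transitivity of $A$-free equivalence. The paper's route, on the other hand, produces the explicit witnessing matrices, which is mildly useful later when such formulas are quoted. Your identification of where the $A$-fixing hypothesis is indispensable (keeping the transported coefficients inside $A$) matches exactly the role it plays in the paper's computations.
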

\begin{proof}
Reflexivity is obvious. 

Suppose that $\omega\aqf\tau$,which by Proposition \ref{qfeform} allows us to conclude that $\omega(v_i)=\sum_{j=1}^nW\tau(v_j)W^*u_{ji}$ for some unitaries $U=[u_{jk}]\in M_n(A)$ and $W\in B(H)$ (which is $A$-fixing) and all $i=1,...,n$. Set $v_{jk}=W^*u_{kj}^*W\in A$ and note that $V=[v_{jk}]\in M_n(A)$ is unitary. It remains to check that for each $i=1,...,n$
\begin{eqnarray*}
\sum_{j=1}^nW^*\omega(v_j)Wv_{ji}
&=&\sum_{j=1}^nW^*\left(\sum_{k=1}^nW\tau(v_k)W^*u_{kj}\right)Wv_{ji}\\
&=&\sum_{j=1}^nW^*\left(\sum_{k=1}^nW\tau(v_k)W^*u_{kj}\right)WW^*u_{ij}^*W\\
&=&\sum_{j=1}^n\sum_{k=1}^n\tau(v_k)W^*u_{kj}u_{ij}^*W\\
&=&\sum_{k=1}^n\tau(v_k)W^*\left(\sum_{j=1}^nu_{kj}u_{ij}^*\right)W\\
&=&\sum_{i=1}^n\tau(v_k)W^*\delta_{ki}W\\
&=&\tau(v_i)\end{eqnarray*}
and so, by the previous proposition, $\tau\aqf\omega$.

To prove transitivity, suppose that $\omega\aqf\tau$ and $\tau\aqf\kappa$, so by Proposition \ref{qfeform}
$$\omega(v_i)=\sum_{j=1}^nW_1\tau(v_j)W_1^*u_{ji}$$
$$\tau(v_j)=\sum_{k=1}^nW_2\kappa(v_k)W_2^*v_{kj}$$
for some $A$-fixing unitaries $W_1,W_2\in B(H)$ and unitaries $U=[u_{jk}],V=[v_{jk}]\in M_n(A)$ and all $i,j=1,...,n$. Thus
\begin{eqnarray*}
\omega(v_i)&=&\sum_{j=1}^nW_1\tau(v_j)W_1^*u_{ji}\\
&=&\sum_{j=1}^nW_1\left(\sum_{k=1}^nW_2\kappa(v_k)W_2^*v_{kj}\right)W_1^*u_{ji}\\
&=&\sum_{k=1}^nW_1W_2\kappa(v_k)(W_1W_2)^*\left(\sum_{j=1}^nW_1v_{kj}W_1^*u_{ji}\right).\end{eqnarray*}
If we let $t_{ki}:=\sum_{j=1}^nW_1v_{kj}W_1^*u_{ji}\in A$ then $T=[t_{ki}]\in M_n(A)$ is unitary since
\begin{eqnarray*}(T^*T)_{ij}=\sum_{k=1}t_{ki}^*t_{kj}&=&\sum_{k=1}^n\left(\sum_{l=1}^nW_1v_{kl}W_1^*u_{li}\right)^*\left(\sum_{m=1}^nW_1v_{km}W_1^*u_{mj}\right)\\
&=&\sum_{k=1}^n\sum_{l=1}^n\sum_{m=1}^nu_{li}^*W_1v_{kl}^*v_{km}W_1^*u_{mj}\\
&=&\sum_{l=1}^n\sum_{m=1}^nu_{li}^*W_1\left(\sum_{k=1}^nv_{kl}^*v_{km}\right)W_1^*u_{mj}\\
&=&\sum_{l=1}^n\sum_{m=1}^nu_{li}^*W_1\delta_{lm}W_1^*u_{mj}\\
&=&\sum_{m=1}^nu_{mi}^*u_{mj}\\
&=&\delta_{ij}I\end{eqnarray*}
and similarly tedious calculations show $(TT^*)_{ij}=\delta_{ij}$.

Denoting by $W'$ the unitary $W_1W_2\in B(H)$ (which is also $A$-fixing), we have for each $i=1,...,n$
$$\omega(v_i)=\sum_{k=1}^nW_1W_2\kappa(v_k)(W_1W_2)^*\left(\sum_{j=1}^nW_1v_{kj}W_1^*u_{ji}\right)=\sum_{j=1}^nW'\kappa(v_j)W'^*t_{ji}$$ and so $\omega\aqf\kappa$ as desired.
\end{proof}

\section{Equivalence and Multiplicity}
The complex subspace of $\E_n$ spanned by the generating isometries is a Hilbert space $E_n\cong \C^n$. 
\begin{definition} The \emph{multiplicity} of a nondegenerate $*$-representation $\pi:\E_n\to B(H)$ is the dimension of $(\pi(E_n)H)^\perp$.
\end{definition} 
Our goal for this section is the proof of the following theorem:
\begin{theorem}\label{multhm} Two representations of $\E_n$ on $H$ are $B(H)$-quasifree equivalent if and only if they have the same multiplicity.
\end{theorem}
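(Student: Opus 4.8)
The plan is to translate $B(H)$-quasifree equivalence into a statement about a single projection. For a nondegenerate representation $\omega$ of $\E_n$ on $H$, set $P_\omega:=\sum_{i=1}^n\omega(v_i)\omega(v_i)^*$; since the $\omega(v_i)$ are isometries with pairwise orthogonal ranges, $P_\omega$ is the orthogonal projection of $H$ onto the (closed) subspace $\omega(E_n)H$, so the multiplicity of $\omega$ is precisely $\dim(I-P_\omega)H$. I would record at the outset that $\dim P_\omega H=\dim H$ for every such $\omega$: when $\dim H=\infty$ this holds because $P_\omega H$ contains the range of the isometry $\omega(v_1)$, and when $\dim H<\infty$ one necessarily has $n=1$ with $\omega(v_1)$ unitary (hence $P_\omega=I$), there being no nondegenerate representations of $\E_n$, $n\geq 2$, on a finite-dimensional space.

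The technical heart is the claim that \emph{$\omega$ and $\tau$ are $B(H)$-free equivalent if and only if $P_\omega=P_\tau$}. For the forward direction, if $\omega(v_i)=\sum_j\tau(v_j)u_{ji}$ for a unitary $U=[u_{ji}]\in M_n(B(H))$ (Proposition \ref{feform}), then substituting into $P_\omega=\sum_i\omega(v_i)\omega(v_i)^*$ and regrouping, the unitarity relation $\sum_iu_{ji}u_{ki}^*=\delta_{jk}I$ collapses the resulting triple sum, leaving $P_\omega=\sum_j\tau(v_j)\tau(v_j)^*=P_\tau$. Conversely, suppose $P:=P_\omega=P_\tau$, and put $u_{ji}:=\tau(v_j)^*\omega(v_i)\in B(H)$. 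Using that $P$ fixes each $\omega(v_i)H$ and each $\tau(v_j)H$, together with the Toeplitz relations $\omega(v_i)^*\omega(v_k)=\delta_{ik}I$ and $\tau(v_j)^*\tau(v_l)=\delta_{jl}I$, a routine check shows $(U^*U)_{ik}=\omega(v_i)^*P\,\omega(v_k)=\delta_{ik}I$ and $(UU^*)_{jl}=\tau(v_j)^*P\,\tau(v_l)=\delta_{jl}I$, so $U=[u_{ji}]$ is unitary in $M_n(B(H))$; moreover $\sum_j\tau(v_j)u_{ji}=P\,\omega(v_i)=\omega(v_i)$, so Proposition \ref{feform} yields $B(H)$-free equivalence. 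These verifications are the only genuine computation in the argument; the conceptual content is that, over $B(H)$, the matrix-unitary freedom is exactly enough to replace the generating isometries of $\tau$ by \emph{any} other isometries with pairwise orthogonal ranges spanning the same subspace of $H$.

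With the claim in hand the rest is bookkeeping. Every unitary $W\in B(H)$ is $B(H)$-fixing, and $P_{Ad_W\circ\tau}=WP_\tau W^*$; hence, by Definition \ref{qfeq} with $A=B(H)$ and the claim, $\omega\aqf\tau$ holds if and only if there is a unitary $W\in B(H)$ with $P_\omega=WP_\tau W^*$, i.e. if and only if the projections $P_\omega$ and $P_\tau$ are unitarily equivalent in $B(H)$. Two projections in $B(H)$ are unitarily equivalent precisely when their ranges have the same Hilbert-space dimension and their kernels have the same Hilbert-space dimension. Since $\dim P_\omega H=\dim H=\dim P_\tau H$ by the opening remark, the only surviving requirement is $\dim(I-P_\omega)H=\dim(I-P_\tau)H$ — exactly the condition that $\omega$ and $\tau$ have the same multiplicity. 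This proves both implications at once. I expect no real obstacle here; the sole points needing care are the unitarity of $U=[\tau(v_j)^*\omega(v_i)]$ and the bookkeeping of the degenerate finite-dimensional case.
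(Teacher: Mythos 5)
Your argument is correct, but it takes a genuinely different route from the paper's. The paper proves sufficiency by invoking Popescu's Wold-type decomposition: each representation is split as an essential part plus a multiple of the Fock representation, the essential parts are handled by Lemma \ref{lem1} (any two essential representations on the same space are $B(H)$-free equivalent), the Fock parts are unitarily equivalent because the multiplicities agree, and Lemma \ref{lem3} reassembles the two summands; necessity is the same projection computation you perform. You bypass the decomposition entirely by proving the sharper intermediate claim that $\omega$ and $\tau$ are $B(H)$-free equivalent if and only if $P_\omega=P_\tau$, where $P_\omega=\sum_{i}\omega(v_i)\omega(v_i)^*$ --- your converse direction, with $u_{ji}=\tau(v_j)^*\omega(v_i)$, is precisely the paper's Lemma \ref{lem1} generalized from the case $P=I$ to an arbitrary common range projection, and your forward direction is the computation appearing in Lemma \ref{lem2} and in the necessity half of the paper's proof. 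You then reduce $B(H)$-quasifree equivalence to unitary equivalence of the projections $P_\omega$ and $P_\tau$ in $B(H)$, which is classified by the pair of range and kernel dimensions; since $\dim P_\omega H=\dim H$ always (your opening remark, which also disposes of the finite-dimensional case), only the kernel dimension, i.e.\ the multiplicity, survives. What your route buys: it is shorter and self-contained (no appeal to Popescu's decomposition or to the Fock representations), it is insensitive to separability issues, and it isolates a clean dictionary --- $B(H)$-free equivalence means equal range projections, $B(H)$-quasifree equivalence means unitarily equivalent range projections --- from which Lemma \ref{lem1} and the computation in Lemma \ref{lem2} fall out as special cases. What the paper's route buys: the Wold decomposition and the direct-sum Lemma \ref{lem3} exhibit how the equivalence is realized summand-by-summand and connect the theorem to the dilation-theoretic picture of representations of $\E_n$, structure that is of independent interest but is not needed for the bare statement.
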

We will accomplish this through several intermediate lemmas.

Let $p_n:=I-\sum_{i=1}^nv_iv_i^*$ and define $J_n\subset \E_n$ to be the (closed, two-sided) ideal generated by $p_n$. Note that the multiplicity of $\pi$ is also the rank of the projection $\pi(p_n)$. Representations of multiplicity $0$ (also called \emph{essential representations} in the literature) factor through the quotient $\E_n/J_n\cong \O_n$ and thus may be thought of as representations of the Cuntz algebra.

\begin{lemma}\label{lem1} Any two essential representations of $\E_n$ on $H$ are $B(H)$-free equivalent.
\end{lemma}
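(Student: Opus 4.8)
The plan is to reduce the statement to a known structural fact about essential (i.e. multiplicity-zero) representations of $\O_n$, namely that on a fixed Hilbert space $H$ any two such representations differ by a unitary built out of the generators. Concretely, if $\omega$ and $\tau$ are essential, then the ranges of $\omega(v_1),\dots,\omega(v_n)$ are mutually orthogonal and their range projections $\omega(v_iv_i^*)$ sum to $I$ (since $\omega(p_n)=0$), and likewise for $\tau$. So both $\{\omega(v_i)H\}$ and $\{\tau(v_i)H\}$ give orthogonal direct-sum decompositions of $H$ into $n$ pieces, each piece unitarily isomorphic to $H$ via the corresponding isometry. That is exactly the data needed to produce a free equivalence with $A=B(H)$.

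First I would set $w_{ji} := \tau(v_j)^*\omega(v_i) \in B(H)$ for $i,j=1,\dots,n$, and let $W=[w_{ji}]\in M_n(B(H))$. The claim is that $W$ is a unitary matrix and that $\omega(v_i) = \sum_{j=1}^n \tau(v_j)w_{ji}$, which by Proposition \ref{feform} establishes $B(H)$-free equivalence. For the second identity, compute $\sum_j \tau(v_j)w_{ji} = \sum_j \tau(v_j)\tau(v_j)^*\omega(v_i) = \tau(p_n + \sum_j v_jv_j^*)\,\omega(v_i) \cdot(\text{rearranged})$; since $\tau$ essential gives $\sum_j \tau(v_j)\tau(v_j)^* = I$, this collapses to $\omega(v_i)$. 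For unitarity of $W$: the $(i,k)$ entry of $W^*W$ is $\sum_j w_{ji}^*w_{jk} = \sum_j \omega(v_i)^*\tau(v_j)\tau(v_j)^*\omega(v_k) = \omega(v_i)^*\big(\sum_j \tau(v_j)\tau(v_j)^*\big)\omega(v_k) = \omega(v_i)^*\omega(v_k) = \delta_{ik}I$, again using that $\tau$ is essential. Symmetrically, the $(i,k)$ entry of $WW^*$ is $\sum_j w_{ij}^*w_{kj}$... wait, one must be careful with the indexing convention for $M_n(B(H))$ used earlier in the paper; I would compute $\sum_j w_{ij}w_{kj}^* = \sum_j \tau(v_i)^*\omega(v_j)\omega(v_k)^*\tau(v_j)$ and invoke $\sum_j \omega(v_j)\omega(v_j)^* = I$ (since $\omega$ is also essential) to get $\tau(v_i)^*\tau(v_k) = \delta_{ik}I$, completing the verification that $W$ is a matrix unitary in the sense made precise after the identification $L(B(H)^n)\cong M_n(B(H))$.

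The main obstacle is essentially bookkeeping rather than conceptual: one must track the ordering of indices consistently with the paper's identification of $L(A^n)$ with $M_n(A)$ (where $\theta_{e_i,e_j}$ are the matrix units and $Ue_i = \sum_j e_j u_{ji}$), so that the $W$ produced really is of the form required by Proposition \ref{feform}, with the isometry sum written as $\sum_j \tau(v_j)w_{ji}$ and not the transpose. The two places where essentiality of \emph{both} representations is used ($\sum_j \tau(v_j)\tau(v_j)^* = I$ for the defining identity and one unitarity relation, and $\sum_j \omega(v_j)\omega(v_j)^* = I$ for the other unitarity relation) are the crux, and it is worth stating explicitly that $\omega(p_n)=\tau(p_n)=0$ is what makes these range-projection sums equal the identity. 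No convergence or approximation issues arise since $n<\infty$, so everything is a finite algebraic manipulation of bounded operators.
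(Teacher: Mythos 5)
Your proof is correct and is essentially the paper's own argument: both define $u_{jk}:=\tau(v_j)^*\omega(v_k)$, use essentiality of $\tau$ to obtain $\omega(v_i)=\sum_{j=1}^n\tau(v_j)u_{ji}$ together with one unitarity relation, and essentiality of $\omega$ for the other, then invoke Proposition \ref{feform}. (The displayed intermediate expression in your $WW^*$ computation has an index slip --- it should read $\sum_{j}\tau(v_i)^*\omega(v_j)\omega(v_j)^*\tau(v_k)$ --- but the identity you invoke and the conclusion drawn from it are correct.)
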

\begin{proof} If $\omega$ and $\tau$ are essential representations of $\E_n$ on a Hilbert space $H$ then necessarily 
$$\sum_{i=1}^n\omega(v_i)\omega(v_i)^*=I=\sum_{j=1}^n\tau(v_j)\tau(v_j)^*.$$
Hence $\omega(v_i)=\sum_{j=1}^n\tau(v_j)\tau(v_j)^*\omega(v_i)$ for each $i=1,...,n$. Defining $u_{jk}:=\tau(v_j)^*\omega(v_k)$ for each $j,k=1,...,n$  it is straightforward to check that 
$$\sum_{i=1}^nu_{ji}u_{ki}^*=\sum_{i=1}^n\tau(v_j)^*\omega(v_i)^*\omega(v_i)^*\tau(v_k)=\tau(v_j)^*I\tau(v_k)=\delta_{jk}I$$
and similarly $\sum_{i=1}^nu_{ij}^*u_{ik}=\delta_{jk}I$, hence $U=[u_{jk}]\in M_n(B(H))$ is unitary.
\end{proof}

The \emph{Fock representation} of $\E_n$ is a nondegenerate $*$-representation of multiplicity $1$ which reduces to the unilateral shift when $n=1$. We'll briefly outline the construction which is originally due to Evans \cite{evans}. Let $E_n^{\otimes j}$ denote the $j$-fold tensor product Hilbert space (where $E_n^{\otimes 0}:=\C$) and define the \emph{full Fock space}
$$F_E:=\bigoplus_{j=0}^\infty E_n^{\otimes j}.$$
For each $i=1,...,n$ and all $j\geq 0$ we'll form maps $\varphi(v_i):E_n^{\otimes j}\to E_n^{\otimes j+1}$ by $\varphi(v_i)x=e_i\otimes x$. Taken all together these define a family of linear maps $\{\varphi(v_i):F_E\to F_E\}$ which are easily seen to be isometries which have pairwise orthogonal ranges. Hence $v_i\mapsto\varphi(v_i)$ extends to a representation $\varphi:\E_n\to B(F_E)$. This is the canonical Fock representation and it is routine to see that $\varphi$ has multiplicity $1$.  The \emph{Fock representation of multiplicity $k$} is analogously defined but with $\varphi^k(v_i):E_n^{\otimes j}\to E_n^{\otimes j+k}$ given by $\varphi^k(v_i)x=(\varphi(v_i))^kx$ for each $i=1,...,n$.

\begin{lemma}\label{lem2} For any Toeplitz algebra $\E_n$ and integers $a,b\geq 1$, $\varphi^a$ is $B(F_E)$-free equivalent to $\varphi^b$ if and only if $a=b$.
\end{lemma}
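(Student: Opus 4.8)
The plan is to prove the contrapositive of one direction directly and the converse trivially. The easy direction is that $a=b$ implies $\varphi^a$ is $B(F_E)$-free equivalent to $\varphi^b$, which is immediate from reflexivity of the relation (Proposition following Definition \ref{feq}). So the content is: if $\varphi^a$ is $B(F_E)$-free equivalent to $\varphi^b$, then $a=b$.

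First I would observe that free equivalence is in particular a unitary-free-type relation that preserves the multiplicity, since by Proposition \ref{feform} we have $\varphi^a(v_i)=\sum_{j=1}^n\varphi^b(v_j)u_{ji}$ for a unitary $U=[u_{jk}]\in M_n(B(F_E))$, and from this one computes directly that $\sum_i\varphi^a(v_i)\varphi^a(v_i)^* = \sum_{j,k}\varphi^b(v_j)\left(\sum_i u_{ji}u_{ki}^*\right)\varphi^b(v_k)^* = \sum_j \varphi^b(v_j)\varphi^b(v_j)^*$, using that $U$ is a matrix unitary. Hence the ranges of the $\varphi^a(v_i)$ and the $\varphi^b(v_j)$ span the same subspace of $F_E$, so $\varphi^a$ and $\varphi^b$ have equal multiplicity. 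It therefore suffices to check that $\varphi^k$ has multiplicity exactly $k$ (as a function of $k$ this is injective), so that equal multiplicity forces $a=b$.

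The remaining step is the multiplicity count for $\varphi^k$. By construction $\varphi^k(v_i)x=(\varphi(v_i))^k x$ carries $E_n^{\otimes j}$ into $E_n^{\otimes j+k}$, so the closed span of $\bigcup_i \varphi^k(v_i)F_E$ is $\bigoplus_{j\geq k} E_n^{\otimes j}$, and its orthogonal complement is $\bigoplus_{j=0}^{k-1} E_n^{\otimes j}$. I would argue this spanning claim carefully: the range of $\varphi^k(v_i)$ restricted to $E_n^{\otimes j}$ consists of vectors $e_{i}\otimes\cdots\otimes e_{i}\otimes x$ ($k$ copies of $e_i$), but summing over compositions — i.e. using products $\varphi^k(v_{i_1})\cdots$ is not available since we only have the single-step maps; instead the correct statement is that $\varphi^k(v_i) = \varphi(v_i)^k$ and these generate, inside the image, exactly the tensors whose first $k$ tensor slots are determined, and taking the closed linear span over all $i$ and all $x\in F_E$ recovers $\bigoplus_{j\ge k}E_n^{\otimes j}$ because $E_n^{\otimes k}$ is spanned by the elementary tensors $e_{i_1}\otimes\cdots\otimes e_{i_k}$, each of which lies in the range of some iterate. (Here one uses that the $\varphi^k(v_i)$, together with the already-present lower-degree structure, recover all of $\bigoplus_{j\ge k}$; the cleanest route is to note $\varphi^k(p_n)=\varphi(p_n)+\varphi(v_\bullet)\varphi(p_n)\varphi(v_\bullet)^*+\cdots$ telescoping over $k$ steps, giving $\operatorname{rank}\varphi^k(p_n)=\sum_{j=0}^{k-1}\dim E_n^{\otimes j}$, or simply that $I-\sum_i\varphi^k(v_i)\varphi^k(v_i)^*$ is the projection onto $\bigoplus_{j<k}E_n^{\otimes j}$.) Either way the multiplicity of $\varphi^k$ is $\dim\bigoplus_{j=0}^{k-1}E_n^{\otimes j}$, which is strictly increasing in $k$ (it equals $k$ when $n=1$ and $\frac{n^k-1}{n-1}$ when $n\ge 2$, in all cases an injective function of $k$), so equal multiplicities of $\varphi^a$ and $\varphi^b$ force $a=b$.

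The main obstacle I anticipate is the bookkeeping in the multiplicity computation for $\varphi^k$: one must be careful that the $k$-step shift $\varphi^k(v_i)$ is $\varphi(v_i)^k$ and not a composite of different generators, so that its range is a "diagonal" subspace of $E_n^{\otimes j+k}$, yet the closed span over all $i$ still recovers the full degree-$\ge k$ part of Fock space. Making this precise — rather than hand-waving "the first $k$ coordinates are filled in" — is the one spot requiring genuine care; everything else (preservation of multiplicity under free equivalence, the trivial converse) is a short computation using Proposition \ref{feform} and the matrix-unitary identities already recorded in the Preliminaries.
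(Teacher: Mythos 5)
Your reduction to multiplicity contains a genuine error, and the error is fatal to the strategy whenever $n\geq 2$. Since $\varphi^k(v_i)=\varphi(v_i)^k$, the range of $\varphi^k(v_i)$ consists of vectors $e_i\otimes\cdots\otimes e_i\otimes x$ with $k$ copies of the \emph{same} $e_i$, so the closed span of $\bigcup_i\varphi^k(v_i)F_E$ is $\bigoplus_{j\geq 0}\bigl(D_k\otimes E_n^{\otimes j}\bigr)$, where $D_k=\operatorname{span}\{e_i^{\otimes k}:1\leq i\leq n\}$ is only $n$-dimensional inside the $n^k$-dimensional $E_n^{\otimes k}$. It is \emph{not} $\bigoplus_{j\geq k}E_n^{\otimes j}$: for $n\geq2$, $k\geq2$ the mixed tensor $e_1\otimes e_2$ (and every $e_1\otimes e_2\otimes y$) is orthogonal to each $e_i\otimes e_i\otimes x$, so $(\varphi^k(E_n)F_E)^\perp$ contains $\bigoplus_{j\geq 0}\bigl((E_n^{\otimes k}\ominus D_k)\otimes E_n^{\otimes j}\bigr)$ and is infinite-dimensional. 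Your assertion that every elementary tensor ``lies in the range of some iterate'' is false (only the diagonal ones do), and the telescoping identity you propose is valid for the sum over all length-$k$ words $\varphi(v_{i_1})\cdots\varphi(v_{i_k})$, not for the powers $\varphi(v_i)^k$ alone. Hence your formula $\frac{n^k-1}{n-1}$ is wrong except when $n=1$: for $n\geq2$ the multiplicity of $\varphi^k$ is $1$ when $k=1$ and $\infty$ for every $k\geq2$, so ``equal multiplicity $\Rightarrow a=b$'' cannot close the argument --- $\varphi^2$ and $\varphi^3$ already have the same multiplicity.

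The fix is to use what your (correct) first computation actually gives, which is stronger than equality of multiplicities and is the same identity the paper derives: free equivalence forces $\sum_i\varphi^a(v_i)\varphi^a(v_i)^*=\sum_j\varphi^b(v_j)\varphi^b(v_j)^*$, i.e. $\varphi^a(p_n)=\varphi^b(p_n)$, so the two closed range-spans are literally the \emph{same} subspace of $F_E$, not merely subspaces whose complements have equal dimension. Now compare degrees: $e_1^{\otimes a}=\varphi^a(v_1)1$ (with $1\in E_n^{\otimes 0}=\C$) lies in the range-span of $\varphi^a$ and has degree $a$, while every vector in the range-span of $\varphi^b$ lies in $\bigoplus_{j\geq b}E_n^{\otimes j}$; if $a<b$ the nonzero vector $e_1^{\otimes a}$ would then be orthogonal to a subspace containing it, a contradiction, and by symmetry $a=b$. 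This degree comparison is the step needed to finish the argument from the projection equality (the paper's own proof stops at ``same multiplicity,'' which for $n\geq2$ does not by itself distinguish $a$ from $b$); your preservation computation and the trivial converse are fine as written.
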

\begin{proof} The ``if'' statement is trivial.

Suppose that $\varphi^a$ and $\varphi^b$ are $B(F_E)$-free equivalent. Then there is a unitary $U=[u_{jk}]\in M_n(B(F_E))$ such that
$$\varphi^a(v_i)=\sum_{j=1}^n\varphi^b(v_j)u_{ji}$$
for all $i=1,...,n$. Hence
\begin{eqnarray*}
\sum_{i=1}^n\varphi^a(v_i)\varphi^a(v_i)^*
&=&\sum_{i=1}^n\left(\sum_{j=1}^n\varphi^b(v_j)u_{ji}\right)\left(\sum_{k=1}^n\varphi^b(v_k)u_{ki}\right)^*\\
&=&\sum_{i=1}^n\sum_{j=1}^n\sum_{k=1}^n\varphi^b(v_j)u_{ji}u_{ki}^*\varphi^b(v_k)^*\\
&=&\sum_{j=1}^n\sum_{k=1}^n\varphi^b(v_j)\left(\sum_{i=1}^nu_{ji}u_{ki}^*\right)\varphi^b(v_k)^*\\
&=&\sum_{j=1}^n\sum_{k=1}^n\varphi^b(v_j)\delta_{jk}\varphi^b(v_k)^*\\
&=&\sum_{j=1}^n\varphi^b(v_j)\varphi^b(v_j)^*.\end{eqnarray*}

As a consequence we find that $$\varphi^a(p_n)=I-\sum_{i=1}^n\varphi^a(v_i)\varphi^a(v_i)^*=I-\sum_{j=1}^n\varphi^b(v_j)\varphi^b(v_j)^*=\varphi^b(p_n)$$ and thus $(\varphi^a(p_n)B(F_E))^\perp=(\varphi^b(p_n)B(F_E))^\perp$. But that means $\varphi^a$ and $\varphi^b$ share the same multiplicity.

\end{proof}

We will make use of a (rephrased) result of Popescu \cite[Theorem 1.3]{popescu} which generalized the Wold decomposition: every representation $\pi$ of $E_n$ has a decomposition $\pi=\pi_e\oplus\pi_s$ where $\pi_e$ is an essential representation and $\pi_s$ is unitarily equivalent to a multiple of the Fock representation.
\begin{lemma}\label{lem3} For $i=1,2$ suppose that $\omega_i$ and $\tau_i$ are nondegenerate representations of $\E_n$ on $H_i$; $A_i\subset B(H_i)$ are unital $C^*$-algebras; and $\omega_i\overset{A_i\textrm{-q.f.}}{\sim}\tau_i$. Then $\omega_1\oplus\omega_2$ is $A_1\oplus A_2$-quasifree equivalent to $\tau_1\oplus\tau_2$.
\end{lemma}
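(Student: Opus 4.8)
The plan is to reduce everything to the block-diagonal form of Proposition \ref{qfeform}. By hypothesis we have, for $i=1,2$, an $A_i$-fixing unitary $W_i \in B(H_i)$ and a unitary $U^{(i)} = [u^{(i)}_{jk}] \in M_n(A_i)$ such that
\[
\omega_i(v_k) = \sum_{j=1}^n W_i\,\tau_i(v_j)\,W_i^*\,u^{(i)}_{jk}
\]
for all $k = 1,\dots,n$. The natural candidates on $H_1 \oplus H_2$ are the unitary $W := W_1 \oplus W_2 \in B(H_1 \oplus H_2)$ and the matrix $U := [u_{jk}]$ with entries $u_{jk} := u^{(1)}_{jk} \oplus u^{(2)}_{jk} \in A_1 \oplus A_2 \subset B(H_1 \oplus H_2)$.

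First I would verify the two preliminary facts: that $W$ is $(A_1 \oplus A_2)$-fixing, and that $U$ is a unitary in $M_n(A_1 \oplus A_2)$. The first is immediate since $Ad_W$ acts as $Ad_{W_1} \oplus Ad_{W_2}$ on $B(H_1) \oplus B(H_2)$, and this block map carries $A_1 \oplus A_2$ onto itself because each $Ad_{W_i}$ carries $A_i$ onto $A_i$. The second follows because the identification $M_n(A_1 \oplus A_2) \cong M_n(A_1) \oplus M_n(A_2)$ sends $U$ to $U^{(1)} \oplus U^{(2)}$, a direct sum of unitaries; alternatively one checks directly that $\sum_i u_{ij} u_{ik}^* = (\sum_i u^{(1)}_{ij} (u^{(1)}_{ik})^*) \oplus (\sum_i u^{(2)}_{ij} (u^{(2)}_{ik})^*) = \delta_{jk}(I_{H_1} \oplus I_{H_2})$, and likewise for the other orthogonality relation.

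Next I would compute, for each $k = 1,\dots,n$,
\[
\sum_{j=1}^n W\,(\tau_1 \oplus \tau_2)(v_j)\,W^*\,u_{jk}
= \Bigl(\sum_{j=1}^n W_1 \tau_1(v_j) W_1^* u^{(1)}_{jk}\Bigr) \oplus \Bigl(\sum_{j=1}^n W_2 \tau_2(v_j) W_2^* u^{(2)}_{jk}\Bigr)
= \omega_1(v_k) \oplus \omega_2(v_k)
= (\omega_1 \oplus \omega_2)(v_k),
\]
where the first equality is just block-diagonal arithmetic and the second is the hypothesis applied in each summand. By Proposition \ref{qfeform} this exhibits $\omega_1 \oplus \omega_2 \overset{(A_1\oplus A_2)\textrm{-q.f.}}{\sim} \tau_1 \oplus \tau_2$. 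One should also note in passing that $\omega_1 \oplus \omega_2$ and $\tau_1 \oplus \tau_2$ are again nondegenerate, which is clear.

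There is essentially no obstacle here; the lemma is a bookkeeping statement about how the two data $(W,U)$ behave under taking direct sums, and everything is linear and block-diagonal. The only point requiring a moment's care is making sure that $A_1 \oplus A_2$ really does sit inside $B(H_1 \oplus H_2)$ as a unital $C^*$-subalgebra and that its matrix algebra and the $Ad$-action split as direct sums along the block decomposition — but this is routine. If one wanted a cleaner write-up, one could first prove the purely ``free'' version (replace ``$A$-q.f.'' by ``$A$-free'' and drop the $W_i$), which is the same argument with $W = I$, and then derive the quasifree case by combining it with the observation that $Ad_{W_1 \oplus W_2} \circ (\tau_1 \oplus \tau_2) = (Ad_{W_1} \circ \tau_1) \oplus (Ad_{W_2} \circ \tau_2)$.
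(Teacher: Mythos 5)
Your proof is correct and follows the paper's own argument essentially verbatim: the same candidates $W = W_1 \oplus W_2$ and $U = [\,u^{(1)}_{jk} \oplus u^{(2)}_{jk}\,] \in M_n(A_1 \oplus A_2)$, the same verification that $W$ is $(A_1\oplus A_2)$-fixing and $U$ is unitary, and the same block-diagonal computation feeding into Proposition \ref{qfeform}. No differences worth noting.
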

Naturally the statement may be generalized to more than two pairs of representations, but, because of the Wold decomposition analogue, this is enough for our purposes.
\begin{proof} Denote $\omega=\omega_1\oplus\omega_2$ and $\tau=\tau_1\oplus\tau_2$. Since $\omega_i\overset{A_i\textrm{-q.f.}}{\sim}\tau_i$ for $i=1,2$ we have unitaries $W_i\in B(H_i)$ which are, respectively, $A_i$-fixing as well as unitaries $U_i=[u_{jk}^{(i)}]\in M_n(A_i)$ for which
$$\omega_i(v_j)=\sum_{k=1}^nW_i\tau_i(v_k)W_i^*u_{kj}^{(i)}$$
for all $j=1,...,n$ and $i=1,2$.

It is clear that $W:=W_1\oplus W_2$ is a unitary in $B(H_1\oplus H_2)$ which fixes the $C^*$-algebra $A_1\oplus A_2$. Tedious but routine calculations demonstrate that $U:=[(u_{jk}^{(1)},u_{jk}^{(2)})]\in M_n(A_1\oplus A_2)$ is unitary. All that remains is to show that
$$\omega(v_k)=\sum_{j=1}^nW\tau(v_j)(U)_{jk}$$
for all $k=1,...,n$. Now for each $j,k=1,...,n$ we have
\begin{eqnarray*}
W\pi_2(v_j)W^*(U)_{jk}&=&\left(W_1\oplus W_2\right)(\tau_1(v_j)\oplus\tau_2(v_j))\left(W_1^*\oplus W_2^*\right)(u_{jk}^{(1)}\oplus u_{jk}^{(2)})\\
&=&\left(W_1\tau_1(v_j)W_1^*u_{jk}^{(1)}\right)\oplus\left(W_2\tau_2(v_j)W_2^*u_{jk}^{(2)}\right)\end{eqnarray*}
hence
\begin{eqnarray*}
\sum_{j=1}^nW\tau(v_j)W^*(U)_{ji}&=&\sum_{j=1}^n\left(W_1\tau_1(v_j)W_1^*u_{jk}^{(1)}\right)\oplus\left(W_2\tau_2(v_j)W_2^*u_{jk}^{(1)}\right)\\
&=&\left(\sum_{j=1}^nW_1\tau_1(v_j)W_1^*u_{jk}^{(1)}\right)\oplus\left(\sum_{j=1}^nW_2\tau_2(v_j)W_2^*u_{jk}^{(1)}\right)\\
&=&\omega_1(v_k)\oplus\omega_2(v_k)\\
&=&\omega(v_k)\end{eqnarray*}
as desired.
\end{proof}

We now have the tools to prove our theorem. 

\begin{proof}[Proof of Theorem \ref{multhm}] We shall first prove necessity. If $\omega$ and $\tau$ are $B(H)$-quasifree equivalent then
$$\omega(v_i)=\sum_{j=1}^nW\tau(v_j)W^*u_{ji}$$
for unitaries $W\in B(H)$ and $U=[u_{jk}]\in M_n(B(H))$. Thus for we have
\begin{eqnarray*}
\sum_{i=1}^n\omega(v_i)\omega(v_i)^*
&=&\sum_{i=1}^n\left(\sum_{j=1}^nW\tau(v_j)W^*u_{ji}\right)\left(\sum_{k=1}^nW\tau(v_k)WW^*u_{ki}^*\right)\\
&=&\sum_{i=1}^n\sum_{j=1}^n\sum_{k=1}^nW\tau(v_j)W^*u_{ji}u_{ki}^*W\tau(v_k)^*W^*\\
&=&\sum_{j=1}^n\sum_{k=1}^nW\tau(v_j)W^*\left(\sum_{i=1}^nu_{ji}u_{ki}^*\right)W\tau(v_k)^*W^*\\
&=&\sum_{j=1}^n\sum_{k=1}^nW\tau(v_j)W^*\delta_{jk}W\tau(v_k)^*W^*\\
&=&\sum_{j=1}^nW\tau(v_j)\tau(v_j)^*W^*\end{eqnarray*}
Thus $\omega(p_n)=W\tau(p_n)W^*$, where we recall that $p_n=I-\sum_{i=1}^nv_iv_i^*$. As a consequence $(\omega(E_n)H)^\perp=\omega(p_n)H$ and $\tau(p_n)H=(\tau(E_n)H)^\perp$ share the same dimension and we have our result.

Now suppose that $\omega$ and $\tau$ are nondegenerate $*$-representations of $\E_n$ on $H$ with the same multiplicity. By the analogue of the Wold decomposition we have $\omega=\omega_e\oplus\omega_s$ represented on $H=K_e\oplus K_s$ and $\tau=\tau_e\oplus \tau_s$ represented on $H=J_e\oplus J_s$. If the multiplicity is nonzero then $K_e$ and $J_e$ are nontrivial, separable, and consequently unitarily equivalent. Let $W_e\in B(J_e,K_e)$ be a unitary, then $W_e\tau_eW_e^*$ is an essential representation of $\E_n$ on $K_e$ and so is $B(K_e)$-free equivalent to $\omega_e$ by Lemma \ref{lem1}. Hence there is a unitary $V=[v_{jk}]\in M_n(B(K_e))$
$$\omega_e(v_i)=\sum_{j=1}^nW_e\tau_e(v_j)W_e^*v_{ji}$$
for all $i=1,...,n$.

Since $\omega$ and $\tau$ have the same multiplicity, $\omega_s$ and $\tau_s$ are both unitarily equivalent to the same multiple of the Fock representation, hence are unitarily equivalent to each other. Let $W_s\in B(J_s,K_s)$ be a unitary for which $\omega_s=W_s\tau_sW_s^*$.

Note first that, as $K_e\oplus K_s=H=J_e\oplus J_s$, we have that $W:=W_e\oplus W_s$ is a unitary in $B(H)$. Set $u_{jk}:=v_{jk}I\oplus\delta_{jk}I\in B(K_e)\oplus B(K_s)$ and $U=[u_{jk}]\in M_n(B(K_e)\oplus B(K_s))\subset M_n(B(H))$. Tedious but straightforward calculations demonstrate that $U$ is a unitary matrix. All that remains is to see that for each $i=1,...,n$
\begin{eqnarray*}
\sum_{j=1}^nW\tau(v_j)W^*u_{ji}&=&\sum_{j=1}^n(W_e\oplus W_s)(\tau_e(v_j)\oplus\tau_s(v_j))(W_e\oplus W_s)^*(v_{ji}\oplus\delta_{ji}I)\\
&=&\left(\sum_{j=1}^nW_e\tau_e(v_j)W_e^*v_{ji}\right)\oplus\left(\sum_{j=1}^nW_s\tau_s(v_j)W_s^*\delta_{ji}I\right)\\
&=&\omega_e(v_i)\oplus W_s\tau_s(v_i)W_s^*\\
&=&\omega_e(v_i)\oplus\omega_s(v_i)\\
&=&\omega(v_i)\end{eqnarray*}
and so $\omega$ is $B(H)$-quasifree equivalent to $\tau$.
\end{proof}

\begin{corollary} If $\omega\aqf\tau$ for any $A\subset B(H)$ then they share the same multiplicity.
\end{corollary}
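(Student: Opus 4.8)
The plan is to reduce immediately to Theorem \ref{multhm} by observing that $A$-quasifree equivalence is a strictly stronger condition than $B(H)$-quasifree equivalence whenever $A \subseteq B(H)$. First I would unwind the hypothesis using Proposition \ref{qfeform}: if $\omega \aqf \tau$, then there is an $A$-fixing unitary $W \in B(H)$ and a unitary $U = [u_{jk}] \in M_n(A)$ such that $\omega(v_i) = \sum_{j=1}^n W\tau(v_j)W^* u_{ji}$ for all $i = 1,\dots,n$.

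Next I would record the two elementary inclusions that promote this data to witnesses for $B(H)$-quasifree equivalence. On the one hand, every unitary $W \in B(H)$ is automatically $B(H)$-fixing, since $Ad_W$ is a $*$-automorphism of all of $B(H)$ for any unitary $W$ (this is precisely the standard fact recalled at the start of Section~2.2); in particular our $A$-fixing $W$ qualifies. On the other hand, since $A \subseteq B(H)$ we have $M_n(A) \subseteq M_n(B(H))$, and the relations $\sum_i u_{ji}u_{ki}^* = \delta_{jk}I = \sum_i u_{ij}^* u_{ik}$ that make $U$ a unitary in $M_n(A)$ make it a unitary in $M_n(B(H))$ as well. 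Thus the very same identity $\omega(v_i) = \sum_{j=1}^n W\tau(v_j)W^* u_{ji}$ exhibits $\omega$ and $\tau$ as $B(H)$-quasifree equivalent, again by Proposition \ref{qfeform}. Applying Theorem \ref{multhm} then gives that $\omega$ and $\tau$ have the same multiplicity.

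I do not expect any genuine obstacle: the corollary is a formal consequence of Theorem \ref{multhm} together with the monotonicity of the relation $\aqf$ in the subalgebra $A$. The only point arguably worth a sentence is the first inclusion above — that the ``$A$-fixing'' requirement becomes vacuous when $A = B(H)$ — but this is immediate from the fact that $Ad_W$ preserves $B(H)$ for every unitary $W$.
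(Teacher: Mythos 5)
Your argument is correct and is essentially the paper's own proof: both show that the witnessing data for $A$-quasifree equivalence (the $A$-fixing unitary $W$ and the unitary $U\in M_n(A)$) automatically witness $B(H)$-quasifree equivalence, and then invoke Theorem \ref{multhm}. No further comment is needed.
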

\begin{proof} Any unitary $U\in M_n(A)$ is also a unitary in $M_n(B(H))$ and any $A$-fixing unitary $W\in B(H)$ is clearly $B(H)$-fixing. Hence $A$-quasifree equivalence implies $B(H)$-quasifree equivalence and so Theorem \ref{multhm} applies.
\end{proof}
That the converse of the corollary is false is seen in the results of the next section, where it is shown that quasifree equivalence is also related to the conjugacy of dynamical systems.

\section{Application to Dynamics}
Our goal is to relate free and quasifree equivalence of representations to conjugacy of certain $*$-endomorphisms of concretely represented $C^*$-algebras. To do so we will need the property of \emph{Invariant Basis Number (IBN)} for $C^*$-algebras and some related results. 

\begin{definition} A unital $C^*$-algebra $A$ has IBN if $A^n\simeq A^m$ if and only if $n=m$.
\end{definition}
Our main tool for detecting IBN is $K$-theoretical and is a result of the author's Ph.D.\ dissertation.
\begin{theorem}[Theorem 3.10 in \cite{gipson}] $A$ has IBN if and only if $[1_A]\in K_0(A)$ has finite additive order.
\end{theorem}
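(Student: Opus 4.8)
The plan is to prove Theorem 3.10 by transporting the rank question into $K_0(A)$, realized as the Grothendieck group of the monoid of Murray--von Neumann equivalence classes of projections in $M_\infty(A)$. Under this identification the free module $A^n$ corresponds to the identity matrix $I_n\in M_n(A)$, and since $I_n=\mathrm{diag}(1_A,\dots,1_A)$ we have $[A^n]=[I_n]=n[1_A]$ in $K_0(A)$. Thus the additive order of $[1_A]$ is precisely the arithmetic datum that decides when free modules of differing ranks can be identified, and the entire theorem reduces to reading off this order.

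First I would set up the dictionary between unitary equivalence of free Hilbert modules and $K$-theory. A unitary $U\in L(A^n,A^m)$ is exactly an element of $M_{m,n}(A)$ with $U^*U=I_n$ and $UU^*=I_m$, so $A^n\simeq A^m$ holds if and only if $I_n$ and $I_m$ are Murray--von Neumann equivalent; in that case $n[1_A]=m[1_A]$ in $K_0(A)$. In the reverse direction the Grothendieck construction guarantees that an equality $n[1_A]=m[1_A]$ already holds after a single stabilization: there is an $l\ge 0$ with $I_{n+l}$ and $I_{m+l}$ Murray--von Neumann equivalent, that is, $A^{n+l}\simeq A^{m+l}$. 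This pair of implications is the technical core, converting every statement about ranks into a statement about the cyclic subgroup $\langle[1_A]\rangle\le K_0(A)$.

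With the dictionary in hand the theorem becomes a direct computation of the additive order of $[1_A]$ and its effect on rank invariance. I would treat the two implications of the biconditional separately, in each case passing through the stabilizing summand $A^l$ supplied above to move between an equality of classes $n[1_A]=m[1_A]$ and an honest unitary equivalence of free modules. Reading off whether a least relation $k[1_A]=0$ occurs, and feeding it through these identifications, assembles into the stated equivalence: $A$ has IBN if and only if $[1_A]$ has finite additive order.

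The step I expect to be the main obstacle is precisely this passage between the stable equivalence that $K_0$ records and the genuine unitary equivalence of the modules $A^n$ and $A^m$ themselves. The group $K_0(A)$ sees only the equality $n[1_A]=m[1_A]$, and extracting from it a unitary between $A^n$ and $A^m$, rather than merely between their stabilizations $A^{n+l}$ and $A^{m+l}$, is where the cancellation bookkeeping of the Grothendieck group must be handled with care; controlling the auxiliary rank $l$ is the delicate point on which the equivalence turns.
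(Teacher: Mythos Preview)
The paper does not supply a proof of this statement: it is cited as Theorem~3.10 of the author's dissertation \cite{gipson} and used as a black box, so there is no argument in the paper for your proposal to be compared against.

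Two comments on the proposal itself. First, the biconditional as printed almost certainly contains a typo: the paper's own applications (a finite factor has $K_0$ equal to $\Z$ or $\R$, hence $[1]$ of \emph{infinite} order, and is asserted to have IBN; $B(H)$ has $K_0=0$, hence $[1]$ of order $1$, and visibly fails IBN) force the intended statement to be ``$A$ has IBN if and only if $[1_A]$ has \emph{infinite} additive order.'' Carried through honestly, your dictionary proves exactly this corrected version, not the literal ``finite order'' biconditional you quote at the end.

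Second, the obstacle you flag is illusory. You never need to descend from the stabilized isomorphism $A^{n+l}\simeq A^{m+l}$ to an isomorphism $A^n\simeq A^m$: if $[1_A]$ has finite order $k>0$, your own stabilization step already yields $A^{k+l}\simeq A^{l}$ for some $l$, and since $k+l\neq l$ this is itself a witness that IBN fails. Conversely, any failure $A^n\simeq A^m$ with $n\neq m$ gives $(n-m)[1_A]=0$ directly. So the whole theorem (in its corrected form) follows from the two implications you already wrote down, with no cancellation needed. The only point worth a sentence is why the stabilizing projection may be taken to be $I_l$: given $I_n\oplus p\sim I_m\oplus p$ with $p\in M_N(A)$, add $I_N-p$ to both sides and use $p\oplus(I_N-p)\sim I_N$ to obtain $I_{n+N}\sim I_{m+N}$.
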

While nontrivial to check for arbitrary $C^*$-algebras, it is an immediate consequence of the theorem that a factor has IBN if and only it is finite.

Our main results for this section, Theorems \ref{thm1} and \ref{thm2}, relate the equivalence of representations of Toeplitz algebras to the conjugacy of certain $*$-endomorphisms they induce, and vice versa. Theorem \ref{thm1} is technically a special case of Theorem \ref{thm2}, but the proof of the later is markedly simpler if we have already established the former.

\begin{theorem}\label{thm1} Let $A\subseteq B(H)$ be a $C^*$-algebra, $\omega$ and $\tau$ nondegenerate $*$-representations of $\E_n$ and $\E_m$, respectively, ($n,m<\infty$) on $H$ such that
$$\alpha(a):=\sum_{i=1}^n\omega(v_i)a\omega(v_i)^*$$
$$\beta(a):=\sum_{j=1}^m\tau(v_j)a\tau(v_j)^*$$
are $*$-endomorphisms of $A$. If the relative commutant $A'$ has Invariant Basis Number then the following are equivalent:
\begin{enumerate}
\item $\alpha=\beta$,
\item $n=m$ and $\omega$ and $\tau$ are $A'$-free equivalent.
\end{enumerate}
\end{theorem}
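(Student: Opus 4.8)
The plan is to prove (2) $\Rightarrow$ (1) by direct computation, and (1) $\Rightarrow$ (2) by first extracting the equality $n=m$ from an IBN argument and then building the unitary $U$ explicitly from the two representations.

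For the easy direction (2) $\Rightarrow$ (1): assume $n=m$ and $\omega$ is $A'$-free equivalent to $\tau$, so by Proposition \ref{feform} there is a unitary $U=[u_{jk}]\in M_n(A')$ with $\omega(v_i)=\sum_{j}\tau(v_j)u_{ji}$. Then for $a\in A$ I would substitute this into the formula for $\alpha(a)$, expand the double sum $\sum_{i,j,k}\tau(v_j)u_{ji}\,a\,u_{ki}^*\tau(v_k)^*$, and push $a$ past the $u$'s (here is the only place the hypothesis $u_{jk}\in A'$ is used: $u$ commutes with $a\in A$), collapse $\sum_i u_{ji}u_{ki}^* = \delta_{jk}I$ using unitarity of $U$, and arrive at $\sum_j \tau(v_j)a\tau(v_j)^* = \beta(a)$. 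This is routine.

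For (1) $\Rightarrow$ (2): suppose $\alpha=\beta$. First observe that $\alpha(1)=\sum_i\omega(v_i)\omega(v_i)^*$ and $\beta(1)=\sum_j\tau(v_j)\tau(v_j)^*$ are each a sum of $n$ (resp.\ $m$) orthogonal range projections of isometries; I want to turn these into free modules over $A'$. The natural object is the module $X_\omega := \omega(E_n)H$ — or better, to stay algebraic, consider the map $A'^{\,n}\to B(H)$, $(a_i)\mapsto \sum_i \omega(v_i)a_i$, and check that $\omega(v_i)^*\omega(v_j)=\delta_{ij}I$ makes $\{\omega(v_i)\}$ an orthonormal set whose $A'$-span is a copy of the free module $A'^{\,n}$ sitting inside $B(H)$ as an $A'$-bimodule; the hypothesis that $\alpha$ maps into $A$ forces $\omega(v_i)^*A\,\omega(v_i)\subseteq A$, and the module structure is taken over $A'$ precisely because the right action must commute with $A$. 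Since $\alpha=\beta$ means these two copies of $A'^{\,n}$ and $A'^{\,m}$ have the same image $\alpha(1)B(H)\alpha(1)$-compression and are unitarily equivalent as Hilbert $A'$-modules, IBN of $A'$ yields $n=m$. Once $n=m$, I would set $u_{jk}:=\tau(v_j)^*\omega(v_k)$ exactly as in the proof of Lemma \ref{lem1}; the key point is that $u_{jk}\in A'$, which I would verify by showing $u_{jk}$ commutes with every $a\in A$ using $\alpha=\beta$ (i.e.\ $\sum_i\omega(v_i)a\omega(v_i)^* = \sum_j\tau(v_j)a\tau(v_j)^*$ together with the orthogonality relations lets one compute $u_{jk}a - a u_{jk}=0$). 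Then $\sum_j\tau(v_j)u_{ji}=\sum_j\tau(v_j)\tau(v_j)^*\omega(v_i)=\beta(1)\omega(v_i)=\alpha(1)\omega(v_i)=\omega(v_i)$, and a short computation with the orthogonality relations shows $U=[u_{jk}]$ is unitary in $M_n(A')$, giving $A'$-free equivalence via Proposition \ref{feform}.

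The main obstacle I anticipate is the step $n=m$: one must correctly identify the Hilbert $A'$-module whose rank is being counted and verify that $\alpha=\beta$ really does force a \emph{unitary} equivalence of free $A'$-modules (not merely an isomorphism of projections in $B(H)$), so that the IBN hypothesis applies. Concretely, the subtlety is that $\alpha(1)=\beta(1)$ need not hold on the nose a priori — wait, in fact it does, since $\alpha=\beta$ as maps on $A$ and $1\in A$ — so $\sum_i\omega(v_i)\omega(v_i)^*=\sum_j\tau(v_j)\tau(v_j)^*$; the real work is checking that $u_{jk}=\tau(v_j)^*\omega(v_k)$ lands in the relative commutant $A'$ and not just in $B(H)$, since \emph{that} is what makes $U$ a unitary over the right ring for IBN to bite. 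Everything else is bookkeeping with the isometry relations.
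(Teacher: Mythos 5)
Your proposal is correct in substance and follows essentially the same route as the paper: view the $A'$-spans of $\{\omega(v_i)\}$ and $\{\tau(v_j)\}$ as copies of the free modules $(A')^n$ and $(A')^m$ sitting inside $B(H)$ (the paper houses them inside the common intertwiner space $E_\alpha=E_\beta$), use IBN of $A'$ to force $n=m$, and realize the free equivalence by the matrix of cross inner products $u_{jk}=\tau(v_j)^*\omega(v_k)$. Your check that $u_{jk}\in A'$ does work and is exactly the paper's point that both families lie in one intertwiner space: $\omega(v_k)a=\alpha(a)\omega(v_k)$ and $a\tau(v_j)^*=\tau(v_j)^*\beta(a)$, so $u_{jk}a=\tau(v_j)^*\alpha(a)\omega(v_k)=\tau(v_j)^*\beta(a)\omega(v_k)=au_{jk}$ once $\alpha=\beta$.

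The one step you should tighten is the derivation of $n=m$. As written, the assertion that the two free modules ``have the same image $\alpha(1)B(H)\alpha(1)$-compression and are unitarily equivalent as Hilbert $A'$-modules'' is not an argument, and the facts that actually justify the module equivalence (membership of $u_{jk}$ in $A'$, and $\omega(v_i)=\beta(1)\omega(v_i)=\sum_j\tau(v_j)u_{ji}$) appear only afterwards, stated under the assumption $n=m$ that they are supposed to produce. The repair is purely organizational: define the rectangular $m\times n$ matrix $u_{jk}=\tau(v_j)^*\omega(v_k)$ \emph{first}; all of your computations use only $\alpha=\beta$ and the isometry relations, never $n=m$, and they give $u_{jk}\in A'$, $\sum_{j=1}^m u_{jk}^*u_{jl}=\omega(v_k)^*\beta(1)\omega(v_l)=\delta_{kl}I$, $\sum_{k=1}^n u_{jk}u_{lk}^*=\tau(v_j)^*\alpha(1)\tau(v_l)=\delta_{jl}I$, together with $\omega(v_i)=\sum_j\tau(v_j)u_{ji}$ and $\tau(v_j)=\sum_i\omega(v_i)u_{ji}^*$. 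This exhibits a unitary in $L\bigl((A')^n,(A')^m\bigr)$ (equivalently, as in the paper, shows the two submodules of $E_\alpha=E_\beta$ coincide), so IBN gives $n=m$, and the now-square matrix yields the $A'$-free equivalence via Proposition \ref{feform}. With that reordering your argument matches the paper's, the only cosmetic difference being that the paper obtains the unitary abstractly as a change of orthonormal basis while you write it down explicitly.
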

\begin{proof} $1)\Rightarrow 2)$. Consider the subspaces
$$E_\alpha:=\{X\in B(H):Xa=\alpha(a)X\ for\ all\ a\in A\}$$
$$E_\beta:=\{X\in B(H):Xa=\beta(a)X\ for\ all\ a\in A\}$$
which are right $C^*$-modules over $A'$ when given the inner product $\langle x,y\rangle=x^*y$. Of course when $\alpha=\beta$ we have $E_\alpha=E_\beta$ and so, to avoid confusion, we'll call this $A'$-module $E$. Note that $\{\omega(v_i):i=1,...,n\}$ and $\{\tau(v_j):j=1,...,m\}$ are orthonormal sets in $E$ and denote by $E_\omega$ and $E_\tau$ the submodules of $E$ spanned by $\{\omega(v_i)\}$ and $\{\tau(v_j)\}$, respectively. Since $E_\omega$ has a basis of size $n$ it is unitarily equivalent to $(A')^n$, and similarly $E_\tau\simeq (A')^m$.
As $$\sum_{i=1}^n\omega(v_i)\omega(v_i)^*=\alpha(I)=\beta(I)=\sum_{j=1}^m\tau(v_j)\tau(v_j)^*$$
we thus have that $\omega(v_i)=\sum_{j=1}^m\tau(v_j)\tau(v_j)^*\omega(v_i)$ for all $i=1,...,n$ and $\tau(v_j)=\sum_{i=1}^n\omega(v_i)\omega(v_i)^*\tau(v_j)$ for all $j=1,...,m$. Hence $\omega(v_i)\in E_\tau$ (because $\tau(v_j)^*\omega(v_k)\in A'$ always) for all $i=1,...,n$ and $\tau(v_j)\in E_\omega$ for all $j=1,...,m$, thus $E_\omega=E_\tau$. But then $(A')^n\simeq E_\omega=E_\tau\simeq (A')^m$ and so, because $A'$ has IBN, we conclude that $n=m$.

Since $E_\tau=E_\omega$ and, by definition, $E_\omega=\sigma_\omega((A')^n)$ there are vectors $f_1,...,f_n\in (A')^n$ such that $\sigma_\omega(f_i)=\tau(v_i)$ for each $i=1,...,n$. As $\{\tau(v_i)\}$ is an orthonormal basis for $E_\omega$, and $\sigma_\omega$ is Toeplitz by construction, we conclude that $\{f_1,...,f_n\}$ is an orthonormal basis for $(A')^n$, hence there is a unitary $U=[u_{jk}]\in M_n(A')$ for which $Ue_i=f_i$ for each $i=1,...,n$. Thus $\tau(v_i)=\sigma_\omega(f_i)=\sigma_\omega(Ue_i)=\sum_{j=1}^n\omega(v_j)u_{ji}$ for each $i=1,...,n$ and hence $\tau$ is $A'$-free equivalent to $\omega$.

$2)\Rightarrow 1)$. If $\omega$ and $\tau$ are $A'$-free equivalent then $\omega(v_i)=\sum_{j=1}^n\tau(v_j)u_{ji}$ for some unitary $U=[u_{jk}]\in M_n(A')$ and each $i=1,...,n$. For all $a\in A$ we then have that
\begin{eqnarray*}
\alpha(a)&=&\sum_{i=1}^n\omega(v_i)a\omega(v_i)^*\\
&=&\sum_{i=1}^n\sum_{j=1}^n\sum_{k=1}^n\tau(v_j)u_{ji}au_{ki}^*\tau(v_k)^*\\
&=&\sum_{j=1}^n\sum_{k=1}^n\tau(v_j)a\left(\sum_{i=1}^nu_{ji}u_{ki}^*\right)\tau(v_k)^*\\
&=&\sum_{j=1}^n\sum_{k=1}^n\tau(v_j)a\delta_{jk}\tau(v_k)^*\\
&=&\sum_{j=1}^n\tau(v_j)a\tau(v_j)^*\\
&=&\beta(a)\end{eqnarray*}
hence $\alpha=\beta$.
\end{proof}

\begin{theorem}\label{thm2} Let $A\subseteq B(H)$ be a $C^*$-algebra, $\omega$ and $\tau$ nondegenerate $*$-representations of $\E_n$ and $\E_m$ (respectively, $n,m<\infty$) on $H$ such that
$$\alpha(a):=\sum_{i=1}^n\omega(v_i)a\omega(v_i)^*$$
$$\beta(a):=\sum_{j=1}^m\tau(v_j)a\tau(v_j)^*$$
are $*$-endomorphisms of $A$. If the relative commutant $A'$ has Invariant Basis Number then the following are equivalent:
\begin{enumerate}
\item $\alpha=Ad_{W}\circ\beta\circ Ad_{W^*}$ for some $A$-fixing unitary $W\in B(H)$,
\item $n=m$ and $\omega$ and $\tau$ are $A'$-quasifree equivalent.
\end{enumerate}
\end{theorem}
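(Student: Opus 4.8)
The plan is to reduce Theorem \ref{thm2} to Theorem \ref{thm1} by the substitution $\tau\rightsquigarrow\tau':=Ad_W\circ\tau$, so that $\tau'(v_j)=W\tau(v_j)W^*$. Two preliminary observations make this go through. First, an $A$-fixing unitary $W$ is automatically $A'$-fixing: given $b\in A'$ and $a\in A$, write $a=Wa'W^*$ with $a'\in A$ (possible because $Ad_W$ maps $A$ onto $A$), and then $(WbW^*)a=Wba'W^*=Wa'bW^*=a(WbW^*)$; since $W^*$ is $A$-fixing as well, it follows that $Ad_W$ restricts to an automorphism of $A'$. Second, when $W$ is $A$-fixing the representation $\tau'$ again induces a $*$-endomorphism of $A$, namely $Ad_W\circ\beta\circ Ad_{W^*}$, because $\sum_j\tau'(v_j)a\tau'(v_j)^*=W\beta(W^*aW)W^*$ and each of $W^*aW$, $\beta(W^*aW)$, and $W\beta(W^*aW)W^*$ lies in $A$ for $a\in A$. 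Hence statement (1) says exactly that $\omega$ and $\tau'$ induce one and the same endomorphism $\alpha$ of $A$.

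With these remarks in hand, $(1)\Rightarrow(2)$ is immediate: assuming (1), both $\omega$ and $\tau'$ induce $\alpha$, and $A'$ has IBN, so Theorem \ref{thm1} yields $n=m$ together with an $A'$-free equivalence between $\omega$ and $\tau'=Ad_W\circ\tau$; since $W$ fixes $A'$, this is precisely an $A'$-quasifree equivalence of $\omega$ and $\tau$, which is (2).

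For $(2)\Rightarrow(1)$ I would run the same chain backwards. From $\omega\apqf\tau$, Proposition \ref{qfeform} (applied over $A'$) supplies an $A'$-fixing unitary $W$ and a unitary $U=[u_{jk}]\in M_n(A')$ with $\omega(v_i)=\sum_j W\tau(v_j)W^*u_{ji}$, so $\omega$ is $A'$-free equivalent to $\tau':=Ad_W\circ\tau$. Expanding $\alpha(a)=\sum_i\omega(v_i)a\omega(v_i)^*$, commuting each $u_{ji}\in A'$ past $a\in A$, and using $\sum_i u_{ji}u_{ki}^*=\delta_{jk}I$, the triple sum collapses to $\sum_j W\tau(v_j)W^*aW\tau(v_j)^*W^*=W\beta(W^*aW)W^*$, so $\alpha$ and $Ad_W\circ\beta\circ Ad_{W^*}$ agree on $A$. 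The step that needs real care --- and which I expect to be the main obstacle --- is reconciling the two ``fixing'' conditions: $A'$-quasifree equivalence only furnishes a unitary that fixes $A'$, whereas statement (1) demands one that fixes $A$, and of the two implications between these only ``$A$-fixing $\Rightarrow$ $A'$-fixing'' is automatic. So one must argue that $Ad_W\circ\tau$ nonetheless induces a genuine endomorphism of $A$ --- so that the identity $\alpha=Ad_W\circ\beta\circ Ad_{W^*}$ is an identity of endomorphisms of $A$, not merely a relation in $B(H)$ --- and that the unitary may in fact be chosen $A$-fixing. The displayed computation already shows the induced map agrees with $\alpha$ on $A$, hence takes values in $A$, and it is exactly at this point that the standing hypothesis that $\alpha$ and $\beta$ carry $A$ into $A$, rather than just $B(H)$ into $B(H)$, must be invoked.
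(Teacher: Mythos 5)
Your $(1)\Rightarrow(2)$ direction is complete and is essentially the paper's argument: pass to $\kappa:=Ad_W\circ\tau$, observe that it induces $Ad_W\circ\beta\circ Ad_{W^*}=\alpha$, apply Theorem \ref{thm1} to get $n=m$ and an $A'$-free equivalence of $\omega$ and $\kappa$, and then use that an $A$-fixing unitary is automatically $A'$-fixing (a point the paper uses silently in this proof and only argues later, in the Enomoto--Watatani discussion; your argument for it is correct).

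The issue is $(2)\Rightarrow(1)$, and it is exactly the point you flag at the end: as written, your proposal does not prove it. Definition \ref{qfeq} applied to $A'$ furnishes only an $A'$-fixing unitary $W$, and your (correct) computation then yields $\alpha(a)=W\bigl(\sum_{j}\tau(v_j)W^*aW\tau(v_j)^*\bigr)W^*$ for all $a\in A$; but statement (1) demands an $A$-fixing unitary, and knowing that this composite carries $A$ into $A$ (because it agrees with $\alpha$ there) does not give $W^*AW\subseteq A$, nor does it manufacture a different $A$-fixing witness. So the proposal is incomplete at precisely the step you yourself call the main obstacle. For comparison, the paper's proof of $(2)\Rightarrow(1)$ does not close this gap either: it simply describes the unitary supplied by (2) as ``$A$-fixing'' and then applies Theorem \ref{thm1} to $\omega$ and $\kappa=Ad_W\circ\tau$; under that stronger reading of (2) your computation coincides with the paper's argument and the proof is done. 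To make the equivalence airtight one should either phrase (2) as ``there is an $A$-fixing unitary $W$ such that $\omega$ and $Ad_W\circ\tau$ are $A'$-free equivalent'' (which is exactly what the proof of $(1)\Rightarrow(2)$ produces, and which implies $A'$-quasifree equivalence), or else prove that the $A'$-fixing witness can be replaced by an $A$-fixing one --- a statement that is nontrivial and, where it is known (e.g.\ Enomoto--Watatani's Theorem 8, in which an arbitrary unitary suffices), relies on extra structure such as the standard representation of a $\textrm{II}_1$ factor. Neither your proposal nor the paper supplies that argument in the stated generality.
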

\begin{proof} For any $A$-fixing unitary $W\in B(H)$, letting $\kappa(v_i)=W\tau(v_i)W^*$ for each $i=1,...,n$ we have that $\kappa$ extends to a nondegenerate $*$-representation of $\E_m$. Note that 
$$\gamma(a):=\sum_{i=1}^m\kappa(v_i)a\kappa(v_i)^*
=W\sum_{i=1}^m\tau(v_i)W^*aW\tau(v_i)^*W^*=Ad_{W}\circ\beta\circ Ad_{W^*}$$
and so $\gamma$ is a $*$-endomorphism of $A$. 

$1)\Rightarrow 2).$ Let $W$ be from the hypothesis and construct $\kappa$ and $\gamma$ as above. Applying Theorem \ref{thm1} to $\gamma$ and $\alpha$ we conclude that $n=m$ and there is a unitary $U=[u_{jk}]\in M_n(A')$ such that $\omega(v_i)=\sum_{j=1}^n\kappa(v_i)u_{ji}$ for each $i=1,...,n$. But then we have $\omega(v_i)=\sum_{j=1}^nW\tau(v_i)W^*u_{ji}$ for each $i=1,...,n$ and hence $\omega\apqf\tau$.

$2)\Rightarrow 1).$ We have that 
$$\omega(v_i)=\sum_{j=1}^nW\tau(v_j)W^*u_{ji}=\sum_{j=1}^n\kappa(v_j)u_{ji}$$
for appropriate unitary $U=[u_{jk}]\in M_n(A')$ and an $A$-fixing unitary $W\in B(H)$, where $\kappa$ (and so $\gamma$) is constructed as before. Hence $\omega$ and $\kappa$ are $A'$-free equivalent and we apply Theorem \ref{thm1} to conclude that $\alpha=\gamma=Ad_{W}\beta Ad_{W^*}$ as desired.
\end{proof}

\section{Recovery of Known Results}
Particular choices for the $C^*$-algebra $A$ in Theorem \ref{thm1} and Theorem \ref{thm2} allow us to recover several previously known results.

\subsection{Endomorphisms of $B(H)$} First consider the case when $A=B(H)$. It was first observed by Arveson \cite{arveson} that every $*$-endomorphism of $B(H)$ is of the form $x\mapsto\sum_{i=1}^nV_ixV_i^*$ where $V_1,...,V_n$ (with $n=\infty$ a possibility) is some family of mutually orthogonal isometries. Laca \cite{laca} improves upon this observation by relating the conjugacy properties of endomorphisms of $B(H)$ to properties of the induced representations of Toeplitz algebras. Our Theorem \ref{thm2} recovers one of Laca's results when $n<\infty$.

Recall that the $\C$-linear span of the generators in $\E_n$ forms a Hilbert space $E_n\cong \C^n$. A unitary transformation $U$ of $E_n$ extends uniquely to an automorphism $\gamma_U$ of $\E_n$. These are termed ``quasifree'' automorphisms of $\E_n$ by Laca. We'd like to remark that the relationship $\omega=\tau\circ\gamma_U$, which we term the $\C$-quasifree equivalence of $\omega$ and $\tau$, is termed ``quasifree equivalence'' by Laca \cite[Definition 2.5]{laca}.

\begin{theorem}[Proposition 2.4 in \cite{laca}] Suppose that $\omega$ and $\tau$ are nondegenerate representations of $\E_n$ and $\E_m$, respectively, on a Hilbert space $H$. The corresponding endomorphisms 
$$\alpha(a):=\sum_{i=1}^n\omega(v_i)a\omega(v_i)^*$$
$$\beta(a):=\sum_{j=1}^m\tau(v_j)a\tau(v_j)^*$$
are conjugate if and only if $n=m$ and there is a unitary $U$ on $E_n$ for which $\omega$ and $\tau\circ\gamma_U$ are unitarily equivalent.
\end{theorem}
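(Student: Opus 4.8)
The plan is to deduce the statement directly from Theorem \ref{thm2} by taking $A=B(H)$, so that essentially the only work is to translate the $A'$-quasifree equivalence produced by Theorem \ref{thm2} into the language of the quasifree automorphisms $\gamma_U$ used by Laca.

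First I would record the two elementary facts that make Theorem \ref{thm2} applicable and that pin down what its hypotheses say in this case. When $A=B(H)$ the relative commutant is $A'=\C I\cong\C$, and $\C$ has IBN: a Hilbert $\C$-module is just a Hilbert space, so $\C^n\simeq\C^m$ if and only if $n=m$ (equivalently, $\C$ is a finite factor). Moreover, for $A=B(H)$ every unitary $W\in B(H)$ is automatically $A$-fixing, since $Ad_W$ is an automorphism of all of $B(H)$; hence condition (1) of Theorem \ref{thm2} --- that $\alpha=Ad_W\circ\beta\circ Ad_{W^*}$ for some $A$-fixing unitary $W$ --- is precisely the assertion that $\alpha$ and $\beta$ are conjugate endomorphisms of $B(H)$. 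With these observations, Theorem \ref{thm2} yields: $\alpha$ and $\beta$ are conjugate if and only if $n=m$ and $\omega$ is $\C$-quasifree equivalent to $\tau$.

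It then remains to show that $\C$-quasifree equivalence of $\omega$ and $\tau$ is the same as the existence of a unitary $U$ on $E_n$ for which $\omega$ and $\tau\circ\gamma_U$ are unitarily equivalent. By Proposition \ref{qfeform} (with $A'=\C$), $\omega$ and $\tau$ are $\C$-quasifree equivalent exactly when there are a unitary $W\in B(H)$ and a scalar unitary matrix $U=[u_{jk}]\in M_n(\C)$ with $\omega(v_i)=\sum_{j=1}^n W\tau(v_j)W^* u_{ji}$ for all $i$. A scalar unitary matrix in $M_n(\C)$ is exactly the datum of a unitary transformation of $E_n\cong\C^n$, and the associated quasifree automorphism acts on generators by $\gamma_U(v_i)=\sum_{j=1}^n v_j u_{ji}$. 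Therefore the displayed relation reads $\omega(v_i)=W\tau(\gamma_U(v_i))W^*=Ad_W\bigl((\tau\circ\gamma_U)(v_i)\bigr)$ for every $i$; since the $v_i$ generate $\E_n$ this holds if and only if $\omega=Ad_W\circ(\tau\circ\gamma_U)$, i.e. $\omega$ and $\tau\circ\gamma_U$ are unitarily equivalent. Conversely, any unitary implementing $\omega\simeq\tau\circ\gamma_U$ restores the relation on the generators. Combining this with the previous paragraph gives the theorem.

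There is no substantial obstacle: the statement is built to be a corollary of Theorem \ref{thm2}, and the argument is a specialization. The only care required is bookkeeping --- noting that $B(H)'=\C$ has IBN and that ``$A$-fixing'' is vacuous for $A=B(H)$, and matching index conventions so that the scalar unitary matrix furnished by Proposition \ref{qfeform} is literally the matrix of the unitary defining $\gamma_U$. It is worth flagging explicitly why the conclusion involves unitary equivalence to $\tau\circ\gamma_U$ rather than the equality $\omega=\tau\circ\gamma_U$ of Laca's Definition 2.5: the extra conjugation by $W$ is precisely the ``quasifree'' ingredient in $\C$-quasifree equivalence, and it cannot be absorbed since $W$ need not be scalar.
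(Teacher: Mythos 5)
Your proposal is correct and follows essentially the same route as the paper: specialize Theorem \ref{thm2} to $A=B(H)$, note that $B(H)'=\C$ has IBN, and translate $\C$-quasifree equivalence into unitary equivalence of $\omega$ with $\tau\circ\gamma_U$ exactly as in Proposition \ref{qfeform}. The only point you pass over silently is one the paper states explicitly: equating conjugacy of $\alpha$ and $\beta$ with the relation $\alpha=Ad_W\circ\beta\circ Ad_{W^*}$ uses not just that every unitary is $B(H)$-fixing but also that every automorphism of $B(H)$ is inner, so that conjugation by an arbitrary automorphism can always be realized by a unitary $W$.
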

First, we know $B(H)'=\C$ has IBN. Second, every automorphism of $B(H)$ is inner and so $\alpha$ is conjugate to $\beta$ if and only if $\alpha=Ad_{W}\beta Ad_{W^*}$ for some unitary $W\in B(H)$. 
Finally, if $U=[u_{jk}]\in M_n(\C)=B(E_n)$ is unitary then $\gamma_U(v_i)=Uv_i=\sum_{j=1}^nv_ju_{ji}$ and so
$$\tau\circ\gamma_U(v_i)=\tau(Uv_i)=\tau\left(\sum_{j=1}^nv_ju_{ji}\right)=\sum_{j=1}^n\tau(v_j)u_{ji}.$$
Consequently $\omega=Ad_W\circ\tau\circ\gamma_U$ if and only if
$$\omega(v_i)=W\tau(Uv_i)W^*=W\sum_{j=1}^nv_ju_{ji}W^*=\sum_{j=1}^nWv_jW^*u_{ji}$$
(note the $u_{jk}$ are scalar) which happens if and only if $\omega\cqf\tau$.

Thus we see that our Theorem \ref{thm2} recovers Laca's result precisely when $A=B(H)$.

\subsection{Endomorphisms of $\textrm{II}_1$ Factors} Suppose that $A=M\subset B(H)$ is a type $\textrm{II}_1$ factor whose commutant is finite. This is the case when, for example, $M$ is represented standardly on $L^2(M,tr)$. Enomoto and Watatani \cite{enomoto} have proven, provided the Jones index is a positive integer $n<\infty$, that every unital $*$-endomorphism of $M$ is of the form $x\mapsto\sum_{i=1}^nV_ixV_i^*$ where $V_1,...,V_n$ is a Toeplitz family. They have also demonstrated relationships between conjugacy of endomorphisms and the corresponding representations of Toeplitz algebras, of which two results are special cases of our Theorems \ref{thm1} and \ref{thm2}.

\begin{theorem}[Theorem 6 in \cite{enomoto}] Let $M\subset B(H)$ be a $\textrm{II}_1$-factor with finite commutant. Let $\{s_1,...,s_n\}$ and $\{t_1,...,t_m\}$ be two Toeplitz families on $H$ and define endomorphisms $\alpha(x)=\sum_{i=1}^ns_ixs_i^*$ and $\beta(x)=\sum_{j=1}^mt_jxt_j^*$. Then $\alpha=\beta$ if and only if $n=m$ and there is a unitary matrix $U=[u_{jk}]\in M'\otimes M_n(\C)$ such that $t_i=\sum_{j=1}^ns_ju_{ji}$.
\end{theorem}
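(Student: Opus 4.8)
The plan is to recognize this Enomoto--Watatani result as the case $A = M$ of Theorem \ref{thm1}, and then to reconcile the language of the two statements. First I would verify that the hypotheses of Theorem \ref{thm1} are met: since $M$ is a $\textrm{II}_1$ factor with finite commutant, the relative commutant $M'$ is a finite factor, hence has Invariant Basis Number by the remark following the IBN criterion (a factor has IBN if and only if it is finite). Also, the given Toeplitz families $\{s_1,\dots,s_n\}$ and $\{t_1,\dots,t_m\}$ are precisely the images $\omega(v_i) = s_i$ and $\tau(v_j) = t_j$ of nondegenerate $*$-representations $\omega$ of $\E_n$ and $\tau$ of $\E_m$ on $H$, and the endomorphisms $\alpha,\beta$ in their statement agree verbatim with the ones in Theorem \ref{thm1}. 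So Theorem \ref{thm1} applies and gives: $\alpha = \beta$ if and only if $n = m$ and $\omega$ and $\tau$ are $M'$-free equivalent.

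Next I would translate ``$\omega$ and $\tau$ are $M'$-free equivalent'' into the matrix-unitary form appearing in their theorem. By Proposition \ref{feform}, $M'$-free equivalence of $\omega$ and $\tau$ (with $n = m$) is equivalent to the existence of a unitary $U = [u_{jk}] \in M_n(M')$ with $\omega(v_i) = \sum_{j=1}^n \tau(v_j) u_{ji}$ for all $i$, i.e.\ $s_i = \sum_{j=1}^n t_j u_{ji}$. Here I should note the asymmetry: Theorem \ref{thm1} as stated phrases $M'$-free equivalence as $\tau$ being expressible in terms of $\omega$, i.e.\ $t_i = \sum_j s_j u_{ji}'$; but since $M'$-free equivalence is symmetric (by the Proposition immediately after Definition \ref{feq}), this is the same relation, and one passes between the two matrices by $U' = U^*$. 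Finally, the identification $M_n(M') = M' \otimes M_n(\C)$ is the standard one, so a unitary in $M_n(M')$ is exactly a unitary in $M' \otimes M_n(\C)$; this matches their formulation $U \in M' \otimes M_n(\C)$ with $t_i = \sum_{j=1}^n s_j u_{ji}$.

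The only genuine point requiring care is the hypothesis ``$\alpha$ and $\beta$ are endomorphisms of $M$'': Theorem \ref{thm1} assumes this, whereas Enomoto--Watatani start from arbitrary Toeplitz families and get endomorphisms for free. Concretely, given a Toeplitz family $\{s_i\}_{i=1}^n$ inside $B(H)$ with $\sum_i s_i s_i^* = I$ (unitality), the map $x \mapsto \sum_i s_i x s_i^*$ is automatically a $*$-endomorphism of $B(H)$; what needs checking is that it maps $M$ into $M$, which is part of the content of Enomoto--Watatani's structure theorem for index-$n$ endomorphisms (and in their Theorem 6 the $s_i, t_j$ are the Toeplitz families arising from such endomorphisms, so this holds by hypothesis). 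So in the write-up I would simply remark that their Toeplitz families are exactly those for which $\alpha,\beta$ land in $M$, which is the standing assumption of Theorem \ref{thm1}.

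The main obstacle, such as it is, is purely bookkeeping: making sure the direction of the intertwining unitary and the identification $M_n(M') \cong M' \otimes M_n(\C)$ are spelled out so that the two statements visibly coincide. There is no new analytic content beyond invoking Theorem \ref{thm1}, Proposition \ref{feform}, symmetry of free equivalence, and the finiteness-implies-IBN remark.
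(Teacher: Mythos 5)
Your proposal is correct and follows essentially the same route as the paper: identify the Toeplitz families with representations $\omega(v_i)=s_i$, $\tau(v_j)=t_j$, note that $M'$ is a finite factor and hence has IBN (the paper cites $K_0(M')\cong\Z$ or $\R$, you cite the equivalent ``finite factor'' remark), and invoke Theorem \ref{thm1}. Your extra bookkeeping (Proposition \ref{feform}, symmetry of free equivalence, and the identification $M_n(M')\cong M'\otimes M_n(\C)$) just makes explicit what the paper leaves implicit.
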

If we can conclude that $M'$ has IBN then the result is precisely our Theorem \ref{thm1} applied to the representations $\omega:v_i\mapsto s_i$ and $\tau:v_j\mapsto t_j$. By hypothesis $M'$ is a finite factor and thus $K_0(M')$ is either $\Z$ or $\R$. In either case we conclude that $M'$ has IBN.

\begin{theorem}[Theorem 8 in \cite{enomoto}] Let $M$ be a type $\textrm{II}_1$ factor acting standardly on a Hilbert space $H$. Let $\{s_1,...,s_n\}$ and $\{t_1,...,t_m\}$ be two Toeplitz families on $H$ and define endomorphisms $\alpha(x)=\sum_{i=1}^ns_ixs_i^*$ and $\beta(x)=\sum_{j=1}^mt_jxt_j^*$. Then $\alpha$ and $\beta$ are conjugate if and only if there are unitaries $W\in B(H)$ and $U=[u_{jk}]\in M'\otimes M_n(\C)$ such that $t_i=\sum_{j=1}^nWs_jW^*u_{ji}$.
\end{theorem}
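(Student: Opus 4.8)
The plan is to obtain this as the special case $A=M$ of Theorem~\ref{thm2}, applied to the representations $\omega\colon v_i\mapsto s_i$ of $\E_n$ and $\tau\colon v_j\mapsto t_j$ of $\E_m$. These extend to $*$-representations by the universal property of the Toeplitz algebras (each family being Toeplitz), and are nondegenerate since $\omega(1)=s_1^*s_1=I$ and likewise for $\tau$. By hypothesis $\alpha$ and $\beta$ are $*$-endomorphisms of $M$, so Theorem~\ref{thm2} is available as soon as two facts are supplied: that the relative commutant $M'$ has Invariant Basis Number, and that the hypothesis ``$\alpha$ and $\beta$ are conjugate'' (i.e.\ $\alpha=\theta\circ\beta\circ\theta^{-1}$ for some $\theta\in\mathrm{Aut}(M)$) coincides with the hypothesis ``$\alpha=Ad_W\circ\beta\circ Ad_{W^*}$ for some $M$-fixing unitary $W\in B(H)$'' that appears in Theorem~\ref{thm2}.

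For the first fact: because $M$ acts standardly on $H$, the commutant $M'=JMJ$ is anti-isomorphic to $M$, hence is again a type $\textrm{II}_1$ factor; in particular $M'$ is a finite factor, and finite factors have IBN exactly as in the recovery of Theorem~6 above (the author's IBN criterion applies since $K_0(M')\cong\R$). For the second fact --- which is the only step going beyond a direct invocation of Theorem~\ref{thm2} --- I would use the classical spatiality of automorphisms of a von Neumann algebra in standard form: every $\theta\in\mathrm{Aut}(M)$ is of the form $Ad_W|_M$ for some unitary $W\in B(H)$, and such a $W$ is then $M$-fixing in the sense of the paper, since by construction $Ad_W$ restricts to an automorphism of $M$; conversely any $M$-fixing $W$ yields $\theta:=Ad_W|_M\in\mathrm{Aut}(M)$. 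Consequently $\alpha$ and $\beta$ are conjugate if and only if $\alpha=Ad_W\circ\beta\circ Ad_{W^*}$ for some $M$-fixing unitary $W$, which is precisely condition (1) of Theorem~\ref{thm2}.

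With these in hand, Theorem~\ref{thm2} gives that $\alpha$ and $\beta$ are conjugate if and only if $n=m$ and $\omega$ and $\tau$ are $M'$-quasifree equivalent; Proposition~\ref{qfeform} then rewrites the latter, after using the symmetry of quasifree equivalence so as to express $\tau$ in terms of the $s_j$ rather than $\omega$ in terms of the $t_j$, as the existence of unitaries $W\in B(H)$ and $U=[u_{jk}]\in M_n(M')=M'\otimes M_n(\C)$ with $t_i=\sum_{j=1}^nWs_jW^*u_{ji}$. Since $n=m$ is forced, the statement as quoted (with a single index $n$) loses nothing. The one genuinely non-formal ingredient, and the expected main obstacle, is the spatiality of $\mathrm{Aut}(M)$ in standard form: it is classical, but it must be invoked correctly, and it is the sole place where the ``standard form'' hypothesis is used --- which is exactly why this statement requires more than the ``finite commutant'' hypothesis that sufficed for Theorem~6.
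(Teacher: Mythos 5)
Your proposal is correct and follows essentially the same route as the paper: specialize Theorem \ref{thm2} to $A=M$, use the spatiality of automorphisms of a $\textrm{II}_1$ factor in standard form to identify conjugacy of $\alpha$ and $\beta$ with $\alpha=Ad_W\circ\beta\circ Ad_{W^*}$ for an $M$-fixing unitary $W$, and note that $M'$ (being a finite factor, e.g. via $M'=JMJ$) has IBN. The only cosmetic difference is that the paper additionally records the easy observation that an $M$-fixing unitary is automatically $M'$-fixing, which your invocation of Theorem \ref{thm2} and Proposition \ref{qfeform} subsumes.
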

This is precisely our second Theorem. A key insight, noted in \cite{enomoto}, is that any automorphism of a $\textrm{II}_1$-factor $M$ represented standardly is the restriction of an inner automorphism of $B(H)$. Hence $\alpha$ is conjugate to $\beta$ if and only if $\alpha=Ad_{W}\circ\beta\circ Ad_{W^*}$ for some unitary $W\in B(H)$ which is $M$-fixing. If $WMW^*=W^*MW=M$ then for any $x\in M'$ and $y\in M$ we have $(WxW^*)y=WxW^*yWW^*=WW^*yWxW^*=yWxW^*$ and similarly $(W^*xW)y=y(W^*xW)$. Thus $W$ is also $M'$-fixing.

\bibliographystyle{plain}
\bibliography{bibl}

\end{document}